\newtheorem{proposition}{Proposition}
\newtheorem{lemma}{Lemma}
\newtheorem{cor}{Corollary}
\theoremstyle{definition}
\newtheorem{rmk}{Remark}
\newtheorem{convention}{Convention}
\DeclareMathOperator{\ch}{ch}
\DeclareMathOperator{\stab}{Stab}
\DeclareMathOperator{\proj}{Proj}
\DeclareMathOperator{\diag}{diag}
\DeclareMathOperator{\SL}{SL}
\DeclareMathOperator{\coker}{coker}
\DeclareMathOperator{\GL}{GL}
\DeclareMathOperator{\Tor}{Tor}
\DeclareMathOperator{\tors}{tors}
\DeclareMathOperator{\free}{free}
\let\bar\overline
\let\hat\widehat
\let\emptyset\varnothing
\title{On the cohomology of line bundles over certain flag schemes}
\author{Linyuan Liu}
\address{Institut de Mathématiques de Jussieu-Paris Rive Gauche\\
Sorbonne Université -- Campus Pierre et Marie Curie\\
4, place Jussieu -- Boîte Courrier 247\\
F-75252 Paris Cedex 05\\
France}
\email{linyuan.liu@imj-prg.fr}
\subjclass{ 05E10, 14L15, 20G05} 
\keywords{cohomology, line bundles, flag schemes, Weyl modules, multinomial coefficients}
\date{August 22, 2019} 
\begin{document}
\maketitle

\begin{abstract}
  Let \( G \) be the group scheme \( \SL_{d+1} \) over \( \mathbb{Z}
  \) and let \( Q \) be the parabolic subgroup scheme corresponding to
  the simple roots \( \alpha_{2},\cdots,\alpha_{d-1} \). Then \( G/Q
  \) is the
  \( \mathbb{Z} \)-scheme of partial flags \( \{D_{1}\subset
  H_{d}\subset V\} \). We will calculate the cohomology modules of line
  bundles over this flag scheme. We will prove that the only
  non-trivial ones are isomorphic to the kernel or the cokernel of
  certain matrices with multinomial coefficients.
\end{abstract}
\section*{Introduction}
Fix an integer \( d\geq 2 \). Let \( S=\mathbb{Z}[X_{0},\cdots,X_{d}]
\) be the ring of polynomials over \( \mathbb{Z} \) in the variables
\( X_{0},\cdots,X_{d} \) and for each \( m\in \mathbb{N} \), let \(
S_{m} \) be its graded component of degree \( m \).
Let \( A=\mathbb{Z}[Y_{0},\cdots,Y_{d}] \) be the ring of polynomials
over \( \mathbb{Z} \) in another set of variables \(
Y_{0},\cdots,Y_{d} \) and denote by \( \Delta \) the \( A \)-module of
``inverse'' polynomials:
\begin{equation}
  \label{eq:f82a543df5f928a2}
  \left. \Delta=\mathbb{Z}[Y_{0},\cdots,Y_{d}]_{(Y_{0}\cdots Y_{d})}\middle/
  \sum_{i=0}^{d}\mathbb{Z}[Y_{0},\cdots,Y_{d}]_{(Y_{0}\cdots\hat{Y}_{i}\cdots
    Y_{d})}.\right.
\end{equation}
For each \( n\in\mathbb{N} \), let \( \Delta_{n} \) denote the graded
component of \( \Delta \) of degree \( -n\).
We can easily see that as a \( \mathbb{Z} \)-module, \( \Delta_{n} \) is isomorphic to
\begin{equation}
{\big(\mathbb{Z}[Y_{0}^{-1},\cdots,Y_{d}^{-1}]Y_{0}^{-1}\cdots
Y_{d}^{-1}\big)}_{\deg -n}.\label{eq:f9b7b6cca049f080}
\end{equation}
 Consider
the \( \mathbb{Z} \)-linear map
\begin{equation}\label{eq:phimn}
  \phi=\phi_{m,n}: S_{m-1}\otimes \Delta_{n+d+1}\to  S_{m}\otimes \Delta_{n+d}
\end{equation}
given by the multiplication by the element \(
f=X_{0}\otimes Y_{0}+\cdots+X_{d}\otimes Y_{d} \). The goal (partially achieved) is to
study the cokernel of \( \phi \). Furthermore, there is a natural
action of the group scheme \( G=\SL_{d+1} \) on the representation \(
V \) with basis \( X_{0},\cdots,X_{d} \) and on the dual representation \(
V^{*} \) with dual basis \( Y_{0},\cdots,Y_{d} \), and the element \( f
\) is \( G \)-invariant, hence \( \coker(\phi) \) and \( \ker(\phi) \)
are \( G \)-modules. As will be explained below, these are the cohomology
groups (the only non zero ones) of a certain line bundle \( \mathcal{L}=\mathcal{L}(m,-n-d) \)
on the \( \mathbb{Z} \)-scheme of partial flags \( D_{1}\subset
H_{d}\subset V \).

\section{Notations}
Let \( G \) be the group scheme \( \SL_{d+1} \) over \( \mathbb{Z} \) with \( d\geq 2
\). Let  \( T \) and \( B \) be the subgroup schemes of diagonal
matrices and of  {\it lower} triangular matrices respectively. Let \(
W \) be the Weyl group of \( (G,T) \) and  \( X(T) \)
 the character group of \( T \). For \(
i\in\{0,1,\cdots,d\} \), we define \( \epsilon_i\in X(T) \)  as the
character that sends \( \diag(a_0,a_1,\cdots,a_d) \) to \( a_{i} \)
and we set  $\alpha_i=\epsilon_{i-1}-\epsilon_{i}$. Then \(
\{\alpha_{1},\cdots,\alpha_{d}\} \) is the set of simple roots. We
denote by
$\omega_1,\cdots,\omega_d$ the corresponding fundamental weights and
by \( R^{+} \) the set of positive roots. Let
\( X(T)^{+}\subset X(T) \) be the set of dominant weights and let \(
\rho\in X(T) \) be the half sum of positive roots. The dot action of
the Weyl group is defined by \( w\cdot \lambda=w(\lambda+\rho)-\rho
\), for all \( w\in W \) and \( \lambda\in X(T) \). Let \(
C=\{\lambda\in X(T)\mid \lambda+\rho\in X(T)^{+}\}\). 

If \( N \) is a \( B \)-module, we set
\(H^{i}(N)=H^{i}(G/B,\mathcal{L}(N))  \) where \( \mathcal{L}(N) \) is
the \( G \)-equivariant vector bundle on the flag scheme \( G/B \)
induced by \( N \) (cf. \cite{Jan03} I.5.8). In
particular, if \( \mu\in X(T) \), then \( \mu \) can be viewed as a
one-dimensional \( B \)-module, and we set \( H^{i}(\mu)=H^{i}(G/B,\mathcal{L}(\mu)) \).

We fix \(m,
n\in\mathbb{N} \) and take
$\mu=m\omega_1-(n+d)\omega_d$. Our goal is to calculate the cohomology
groups \( H^{i}(\mu) \) of the line bundle \( \mathcal{L}(\mu) \). The only non zero ones are \( H^{d-1}(\mu)\cong
\ker(\phi_{m,n}) \) and \( H^{d}(\mu)\cong\coker(\phi_{m,n}) \) and we
will show that \( H^{d}(\mu) \) is isomorphic to the cokernel of a
certain matrix of multinomial coefficients of size much smaller than
the rank of the \( \mathbb{Z} \)-modules \( S_{m-1}\otimes \Delta_{n+d+1} \)
and \( S_{m}\otimes \Delta_{n+d} \).

\section{Description of the cohomology groups \( H^{d}(G/P,\mu) \)}

Let \( V \) be the natural representation of  \( G \) and  \( V^{*} \)
the dual representation. Let \( \{X_{0},\allowbreak X_{1},\allowbreak
\cdots, \allowbreak X_{d}\} \) be the
canonical basis of \( V \) and let \( \{Y_{0},Y_{1},\cdots,Y_{d}\} \)
be the dual basis of 
 \( V^{*} \). Let \( P_{d} \) and \( P_{1} \) be the stabilizers of
 the point \( [X_{d}] \in \mathbb{P}(V)\) and of the point \(
 [Y_{0}]\in\mathbb{P}(V^{*}) \) respectively. Let \( Q=P_{d}\cap P_{1}
 \). Then \(P_{d}  \) (resp. \( P_{1} \), resp. \( Q\)) is the
 parabolic subgroup scheme containing \( B \) and corresponding to the
 simple roots \( \alpha_{1},\alpha_{2},\cdots,\alpha_{d-1} \) (resp. \(
 \alpha_{2},\alpha_{3},\cdots,\alpha_{d} \), resp. \(
 \alpha_{2},\cdots,\alpha_{d-1} \)). 
Therefore, denoting by \( S(V) \) resp. \( S(V^{*}) \) the symmetric
algebra of \( V \) resp. \( V^{*} \) one has
\begin{eqnarray*}
  G/P_{d}\cong& \mathbb{P}(V) =&\proj(S(V^*))=\proj(k[Y_{0},Y_{1},\cdots,Y_{d}])\\
  G/P_{1} \cong & \mathbb{P}(V^*)=&\proj(S(V))=\proj(k[X_{0},X_{1},\cdots,X_{d}]).
\end{eqnarray*}

We have for all \( r\in\mathbb{Z} \) (cf. \cite{Jan03} II.4.3)
\begin{equation}
  \label{eq:3305974d3532dde5}
  \mathcal{L}_{G/P_{1}}(r\omega_{1})\cong \mathcal{O}_{\mathbb{P}(V^{*})}(r),
\end{equation}
hence
\begin{equation}
  \label{eq:suite6*}
H^{0}(G/P_{1},r\omega_{1})\cong S_{r}  
\end{equation}
if \( r\geq 0 \), where \(S_{r}=\langle
X_{0}^{a_{0}}X_{1}^{a_{1}}\cdots
X_{d}^{a_{d}}|a_{0}+a_{1}+\cdots+a_{d}=r\rangle\) as in the introduction..

On the other hand, for \( P_{d} \), we have for all \( r\in\mathbb{Z} \) 
\begin{equation}
  \label{eq:suite2*}
  \mathcal{L}_{G/P_{d}}(r\omega_{d})\cong\mathcal{O}_{\mathbb{P}(V)}(r).
\end{equation}
 Hence if 
 \( r\geq 0 \) we have (cf. \cite{Kem93} Cor 9.1.2):
\begin{equation}
  \label{eq:suite5*}
 H^{d}(G/P_{d},-r\omega_{d})\cong \Delta_{r}
\end{equation}
where \(\Delta_{r}=\langle Y_{0}^{-1-b_{0}}Y_{1}^{-1-b_{1}}\cdots
Y_{d}^{-1-b_{d}}| b_{i}\in
\mathbb{N},b_{0}+b_{1}+\cdots+b_{d}+d+1=r\rangle\) as in \eqref{eq:f9b7b6cca049f080}.

We set \(\xi=([Y_{0}], [X_{d}])\in \mathbb{P}(V^{*})\times
\mathbb{P}(V)\). Then
\begin{displaymath}
  Q=\stab(\xi)\text{ and }G/Q\cong G\xi=\mathscr{V}(X_{0}Y_{0}+X_{1}Y_{1}+\cdots+X_{d}Y_{d})
\end{displaymath}
where \( \mathscr{V}(\psi) \) is the closed subscheme  defined by
 a bi-homogeneous polynomial  \( \psi \). This means that \( G/Q \) is
 the flag
 scheme \(\{D_{1}\subset H_{d}\subset V\} \), which is a hypersurface in
 \( \mathbb{P}(V^{*})\times \mathbb{P}(V) \).

Denote \( \mathbb{P}(V^{*})\times \mathbb{P}(V)\) by \( Z \). Then \(
\mathcal{O}_{Z}\cong
\mathcal{O}_{\mathbb{P}(V^{*})}\boxtimes\mathcal{O}_{\mathbb{P}(V)} \)
by Künneth formula. The ideal sheaf defining the subvariety \(
G/Q=\mathscr{V}(f) \) is \( \mathcal{L}(-1,-1) \). More precisely, we
have an exact sequence of sheaves
\begin{equation}
  \label{eq:957ac4756912efef}
  0\to\mathcal{L}(-1,-1)\xrightarrow{f}\mathcal{O}_{Z}\to\mathcal{O}_{G/Q}\to 0,
\end{equation}
i.e.
\begin{equation}
  \label{eq:24b84bcd2c34f7a1}
  0\to\mathcal{L}_{G/P_{1}}(-1)\boxtimes
  \mathcal{L}_{G/P_{d}}(-1)\xrightarrow{f}\mathcal{O}_{\mathbb{P}(V^{*})}\boxtimes
  \mathcal{O}_{\mathbb{P}(V)}\to \mathcal{O}_{G/Q}\to 0,
\end{equation}
where \( f \) means the multiplication by the element \(
f=X_{0}\otimes Y_{0}+\cdots+X_{d}\otimes Y_{d}\).

Hence for all \( m,n\in\mathbb{N} \), by tensoring
\eqref{eq:24b84bcd2c34f7a1} with \(
\mathcal{L}_{G/P_{1}}(m\omega_{1})\boxtimes\mathcal{L}_{G/P_{d}}(-(n+d)\omega_{d})
\), we obtain an exact sequence:
\begin{equation}
  \label{eq:suite1*}
 0 \to \mathcal{O}_{G/P_{1}}(m-1)\boxtimes
    \mathcal{O}_{G/P_{d}}(-n-d-1)
    \xrightarrow{f}
    \mathcal{O}_{G/P_{1}}(m)\boxtimes
    \mathcal{O}_{G/P_{d}}(-n-d)\to \mathcal{L}_{G/Q}(\mu) \to 0. 
\end{equation}
By taking cohomology, we obtain \( H^{i}(G/Q, \mu)=0 \) if \( i\neq
d-1,d \) and 
 an exact sequence of
\(G\)-modules:
\begin{equation}
    0\to H^{d-1}(G/Q, \mu)\to  S_{m-1}\otimes \Delta_{n+d+1} \xrightarrow{f}
    S_{m}\otimes \Delta_{n+d}\to H^{d}(G/Q, \mu)\to 0.
\label{eq:8d3f39d1d8e8a856}
\end{equation}
Since \( H^{0}(Q/B,\mu)\cong \mu \) and \( H^{i}(Q/B,\mu)=0 \) if \(
i>0 \), we have
\begin{displaymath}
  H^{i}(\mu)\cong H^{i}(G/Q,\mu)
\end{displaymath}
for all \( i \). So \eqref{eq:8d3f39d1d8e8a856} gives that \(
H^{d-1}(\mu)=\ker(f) \) and \( H^{d}(\mu)=\coker(f) \).

Let \( \sigma_{1},\cdots,\sigma_{d} \) be the simple reflections, then since \(
\mu=(m,0,\cdots,0,-n-d) \), we have \( \sigma_{d}\cdot
\mu=(m,0,\cdots,0,-n-d+1,n+d-2)\), then \( \sigma_{3}\sigma_{4}\cdots \sigma_{d}\cdot
\mu=(m,-n-2,n+1,0,\cdots,0)\) and \( \sigma_{2}\cdots \sigma_{d}\cdot
\mu=(m-n-1,n,0,\cdots,0) \). Hence \( \mu\in \sigma_{d}\cdots \sigma_{2}\cdot C
\) if \( m\geq n \) and \( \mu\in \sigma_{d}\cdots \sigma_{1}\cdot C \) if \(
n>m \). In particular, \( \mu \) is regular unless \( m=n \), and if
\( m=n \), \( \mu \) is located on a unique wall.

For a field \( k \) and any \( i \), set \(
H_{k}^{i}(\mu)=H^{i}(G_{k}/B_{k},\mu) \), where \( G_{k}
\) and \( B_{k} \) are the \( k \)-group schemes obtained by base
change. Then  we have an exact sequence
\begin{equation}
  \label{eq:4550b9f5b03b97a6}
  0\to H^{i}(\mu)\otimes k\to H^{i}_{k}(\mu)\to
  \Tor_{1}^{\mathbb{Z}}(k,H^{i+1}(\mu))\to 0
\end{equation}
by the universal coefficient theorem (cf. \cite{Jan03} I.4.18). Since
\( H^{d-1}(\mu) \) is a free \( \mathbb{Z} \)-module by
\eqref{eq:8d3f39d1d8e8a856}, it is completely
determined by \( H^{d-1}(\mu)\otimes \mathbb{Q} \). On the other hand, since
the extension \( \mathbb{Z}\to\mathbb{Q} \) is flat, we have \(
H^{d-1}(\mu)\otimes \mathbb{Q}\cong H_{\mathbb{Q}}^{d-1}(\mu)
\) by \eqref{eq:4550b9f5b03b97a6}, and the latter can be calculated by
the Borel-Weil-Bott theorem (cf. \cite{Jan03} II.5.5). More precisely,
we have \( H^{d-1}_{\mathbb{Q}}(\mu)=0 \) if \( n> m \), and \( \ch
H^{d-1}_{\mathbb{Q}}(\mu)=\chi (m-n-1,n,0,\cdots,0)\) if \( m\geq n
\), where \( \ch M \) is the character of \( M \) (cf. \cite{Jan03} I
2.11 (6)), and \( \chi(\mu) \) is the Euler characteristic of \( \mu \)
viewed as a \( B \)-module (cf. \cite{Jan03} II.5.7), which can be
calculated by the Weyl's character formula (cf. \cite{Jan03}
II.5.10). So the most interesting group is \( H^{d}(\mu)\cong
\coker(f) \), which can have torsion. We have an exact sequence of \(
\mathbb{Z} \)-modules
\begin{equation}
  \label{eq:5d56c3e640125b6e}
  0\to H^{d}(\mu)_{\tors}\to H^{d}(\mu)\to H^{d}(\mu)_{\free}\to 0.
\end{equation}
Since \(
H^{d+1}(\mu)=0 \),  for any field \( k \) we have \(
H_{k}^{d}(\mu)\cong H^{d}(\mu)\otimes k \) by
\eqref{eq:4550b9f5b03b97a6}. Tensoring \eqref{eq:5d56c3e640125b6e} by
\( k \) and using the fact that \( H^{d}(\mu)_{\free} \) is torsion free, we thus get
\begin{equation}
  \label{eq:cf5306390199b6af}
   0\to H^{d}(\mu)_{\tors}\otimes k \to H^{d}_{k}(\mu)\to
   H^{d}(\mu)_{\free}\otimes k\to 0 .
 \end{equation}
 First, take \( k=\mathbb{Q} \), this gives an isomorphism \(
  H^{d}(\mu)_{\free}\otimes \mathbb{Q}\cong
  H^{d}_{\mathbb{Q}}(\mu)\), which
 can be calculated by the Borel-Weil-Bott theorem and the Weyl's
 character formula, so \( H^{d}(\mu)_{\free} \) is already
 known. On the other hand, we have
 \begin{equation}
   \label{eq:15d1900947e763cc}
  0\to H^{d-1}(\mu)\otimes k\to H^{d-1}_{k}(\mu)\to
  \Tor_{1}^{\mathbb{Z}}(k,H^{d}(\mu)_{\tors})\to 0. 
\end{equation}
Hence \( H^{d}(\mu)_{\tors} \) determines both \( H^{d-1}_{k}(\mu) \)
and \( H^{d}_{k}(\mu) \) for any field \( k \). Therefore, it suffices
to calculate \( \coker(f)\cong H^{d}(\mu) \) (especially its torsion part) to achieve our goal.

 We set $E= S_{m-1}\otimes \Delta_{n+d+1}$ and  $F= S_{m}\otimes \Delta_{n+d}$. The highest weight of  \( E \) and \( F \) is $(m+n-1)\omega_1$. 

We know that $X_0$, $X_1$,$\cdots$, $X_d$ are of weights $\omega_1$,
$\omega_1-\alpha_1$, $\cdots$,
$\omega_1-\alpha_1-\alpha_2-\cdots-\alpha_d=-\omega_{d}$ and $Y_i$ is of opposite
weight to $X_i$. Since  $f$ preserves the weight spaces, we can
restrict \( f \) to the \( \nu \)-weight space for each dominant
weight \( \nu \), and we get a linear map
\( f_{\nu}: E_{\nu}\to F_{\nu} \), where \( E_{\nu} \) and \( F_{\nu}
\) are the \( \nu \)-weight spaces of \( E \) and \( F \) respectively. Hence it suffices
to calculate the cokernel of  $f_{\nu}$ for each dominant weight
$\nu\leq (m+n-1)\omega_{1}$, where \( \leq \) is the usual partial
order on \( X(T) \).

For each such \( \nu \), there exists \(
s_{1},s_{2}\cdots, s_{d}\in \mathbb{N} \) such that
\begin{align*}
\nu=&(m+n-1)\omega_1-s_1\alpha_1-s_2\alpha_2-\cdots-s_d\alpha_d\\
=&(m+n-1-2s_1+s_2)\omega_1+(s_1-2s_2+s_3)\omega_2+(s_2-2s_3+s_4)\omega_3\\
&+\cdots+(s_{d-2}-2s_{d-1}+s_d)\omega_{d-1}+(s_{d-1}-2s_d)\omega_d
\end{align*}
with $m+n-1-2s_1+s_2\geq 0$ and $s_{i-1}-2s_i+s_{i+1}\geq 0$ for
$2\leq i\leq d-1$ and $s_{d-1}-2s_d\geq 0$. Hence the
$\nu$-weight space consists of monomials \( X_{0}^{a_{0}}\cdots
X_{d}^{a_{d}}Y_{0}^{-1-b_{0}}\cdots Y_{d}^{-1-b_{d}} \) such that $(a_0+b_0, a_1+b_1, \cdots,
a_d+b_d)=(m+n-1-s_1, s_1-s_2, \cdots,s_{d-1}-s_d,s_d)$. Therefore, if
we fix $\nu$, a monomial in the $\nu$-weight space is determined by
$b=(b_0, b_1,\cdots, b_d)$. In the following, the letter $b$ without
subscript means a tuple of non-negative integers.

\section{The case \( n\leq m \)}\label{section:nleqm}

\paragraph{\ref{section:nleqm}.1}
If $s_1\leq n-1$, then a monomial $(b_0,b_1,\cdots,b_d)$ in
$E_{\nu}$ satisfies $b_0\geq 1$ since $b_1+\cdots+b_d\leq s_1<n$ and
$b_0+b_1+\cdots+b_d=n$. Let
\begin{multline*}
A=\{b=(b_0,b_1,\cdots,b_d)\in
\mathbb{N}^{d+1}\mid b_0+b_1+\cdots+b_d=n,\quad 1\leq b_0\leq
m+n-1-s_1,\\\quad b_i\leq s_i-s_{i+1}\quad \text{if} \quad 1\leq i\leq
d-1,\; b_d\leq s_d \}.
\end{multline*}
Then we take as a basis for \( E_{\nu} \) the set
 $\{v_b\mid  b\in A\}$ where $v_b$ is the monomial determined by $b$. Since \(
 m+n-1-s_{1}\geq 2n-1-s_{1}\geq n \), we have
\begin{multline*}
A=\{b=(b_0,b_1,\cdots,b_d)\in
\mathbb{N}^{d+1}\mid b_0+b_1+\cdots+b_d=n,\quad 1\leq b_0\leq
n,\\\quad b_i\leq s_i-s_{i+1}\quad \text{if} \quad 1\leq i\leq
d-1, \; b_d\leq s_d \}.
\end{multline*}
 For
 $0\leq i\leq d$, we set
 $e_i=(0,0,\cdots,1,\cdots,0)\in\mathbb{Z}^{d+1}$ where $1$ is at the
 $i$-th position, then we take as a basis for \( F_{\nu} \) the set   
  $\{w_{b}\mid  b\in A\} $  where $w_b$ is the
  monomial determined by
 $b-e_0$. We set \( w_{b}=0 \) whenever \( b\notin
 A \).  With these notations, we have for all \( u\in A \)
$$
f(v_u)=w_{u}+w_{u+e_0-e_1}+\cdots+w_{u+e_0-e_d}.
$$ 

We equip the set $A\subset\mathbb{N}^{d+1}$
with the reverse lexicographic  order. Then $u+e_0-e_i<u$ for all
$1\leq i\leq d$, thus the matrix of $f$ with respect to the bases $v_b$
and $w_b$ is lower triangular, and its  entries on the diagonal
are all \( 1 \). Hence the cokernel $H^d(\mu)_{\nu}$ of \( f_{\nu} \)
is zero if \( s_{1}<n \).

This proves that every weight of \( H^{d}(\mu) \) is \( \leq (m+n-1)\omega_{1}-n\alpha_{1} \).

\medskip
\paragraph{\ref{section:nleqm}.2}
If \( s_{1}\geq n \), 
set $s_1=n+k$ with $k\geq0$. We introduce
\begin{align*}
 A=&\{b=(b_0,b_1,\cdots,b_d)\in \mathbb{N}^{d+1}\mid b_0+b_1+\cdots+b_d=n,\quad 1\leq b_0\leq m-1-k,\\
&b_i\leq s_i-s_{i+1}\quad \text{if} \quad 1\leq i\leq d-1, \quad b_d\leq s_d \},\\ 
C=&\{b=(0,b_1,\cdots,b_d)\in \mathbb{N}^{d+1}\mid b_1+\cdots+b_d=n,\\
& b_i\leq s_i-s_{i+1}\quad \text{if} \quad 1\leq i\leq d-1, \quad
                                                                        b_d\leq s_d \},\\ D=&\{b=(m-k-1,b_1,\cdots,b_d)\in \mathbb{N}^{d+1}\mid b_1+\cdots+b_d=k+n-m, \quad b_i\leq s_i-s_{i+1}\\
& \text{if} \quad 1\leq i\leq d-1,\quad b_d\leq s_d \}.
\end{align*}

We take the set $\{v_b\mid b\in A\cup C\}$ as the basis of $E_{\nu}$ where
$v_b$ is the monomial determined by $b=(b_0,\cdots,b_d)$. We take the
set $\{w_b\mid b\in A\}\cup \{u_b\mid b\in D\}$ as the basis of $F_{\nu}$
where $w_b$ is the monomial determined by $b-e_0$ and $u_b$ is the monomial determined by $b\in D$. 
\begin{convention}
  Let $b\in \mathbb{Z}^{d+1}$. If $b\notin A\cup C$,  we set
  $v_b=0$. If $b\notin A$, we set $w_b=0$. If $b\notin D$, we set $u_b=0$.
\end{convention}

With these notations, we have
\begin{equation}
  \begin{aligned}
    f(v_b)&=w_b+w_{b+e_0-e_1}+w_{b+e_0-e_2}+\cdots w_{b+e_0-e_d}\quad \text{if} \quad b_0\leq m-k-2\\
    f(v_b)&=w_b+u_{b-e_1}+u_{b-e_2}+\cdots+u_{b-e_d} \quad
    \text{if}\quad b_0=m-k-1.
  \end{aligned}
\end{equation}

Now we make a change of basis of  $E_{\nu}$ by defining
$v'_b$ for all $b\in A\cup C$ by:

\begin{itemize}
\item if $b_0=m-k-1$, we set $v'_b=v_b$;

 \item  if $b_0=m-k-2$, we set
  \begin{equation}
    v'_b=v_b-v_{b+e_0-e_1}-v_{b+e_0-e_2}-\cdots-v_{b+e_0-e_d} .
  \end{equation}
\end{itemize}

Let \( j\geq 1 \). If we have already defined  $v'_b$ for all $b$
such that
$j\leq b_0\leq m-k-2$, we set
\begin{equation}
  v'_b=v_b-v'_{b+e_0-e_1}-v'_{b+e_0-e_2}-\cdots-v'_{b+e_0-e_d} \quad \text{if}\quad b_0=j-1.
\end{equation}
Hence $v'_b$ is defined for all $b\in A\cup C$.

Therefore, if $b_0=m-k-1$, we have:
\begin{equation}
  \begin{aligned}
    f(v'_b)&=w_b+u_{b-e_1}+u_{b-e_2}+\cdots+u_{b-e_d}\\
    &=w_{b}+\binom{1}{1,0,\cdots,0}u_{b-e_{1}}+\binom{1}{0,1,0,\cdots,0}u_{b-e_{2}}+\cdots+\binom{1}{0,\cdots,0,1}u_{b-e_{d}}\\
    &=w_{b}+\sum_{b'\in D}\binom{1}{b_{1}-b_{1}',b_{2}-b_{2}',\cdots,b_{d}-b_{d}'}u_{b'}.
  \end{aligned}
  \label{eq:32ac0045f9a7c897}
\end{equation}

\begin{lemma}
  For all $b\in
A\cup C$, we have 
\begin{equation}
  \label{eq:5555}
  f(v'_b)=w_b-(-1)^{m-k-b_{0}}\sum_{b'\in D}\binom{m-k-b_0}{b_1-b_1',b_2-b_2',\ldots,b_d-b_d'}u_{b'}.
\end{equation}
\end{lemma}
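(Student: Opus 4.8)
The plan is to establish \eqref{eq:5555} by \emph{downward} induction on the zeroth coordinate $b_{0}$, mirroring the top-down recursion that defines the $v'_{b}$. The base case is $b_{0}=m-k-1$: there $m-k-b_{0}=1$, so $-(-1)^{m-k-b_{0}}=1$ and the right-hand side of \eqref{eq:5555} is exactly the expression already computed in \eqref{eq:32ac0045f9a7c897}. So nothing is left to check at the top level.

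For the inductive step I fix $j\geq 1$, assume \eqref{eq:5555} for every index with zeroth coordinate equal to $j$, and treat $b$ with $b_{0}=j-1$. Since then $b_{0}\leq m-k-2$, the first line of the defining relations for $f$ gives $f(v_{b})=w_{b}+\sum_{i=1}^{d}w_{b+e_{0}-e_{i}}$. Applying $f$ to $v'_{b}=v_{b}-\sum_{i=1}^{d}v'_{b+e_{0}-e_{i}}$ and noting that each index $b+e_{0}-e_{i}$ has zeroth coordinate $j$, I substitute the induction hypothesis for $f(v'_{b+e_{0}-e_{i}})$. The $w$-parts of those expressions are precisely the $w_{b+e_{0}-e_{i}}$, which cancel against the sum $\sum_{i}w_{b+e_{0}-e_{i}}$ coming from $f(v_{b})$; boundary indices falling outside $A$ or $D$ contribute nothing thanks to the Convention. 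Hence among the $w$'s only $w_{b}$ survives, and what remains is $w_{b}$ plus a $\mathbb{Z}$-linear combination of the $u_{b'}$.

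It then remains to identify the coefficient of each $u_{b'}$. Writing $c_{l}=b_{l}-b'_{l}$ for $1\leq l\leq d$ and using $(b+e_{0}-e_{i})_{l}-b'_{l}=c_{l}-\delta_{il}$, this coefficient equals
\begin{displaymath}
(-1)^{m-k-j}\sum_{i=1}^{d}\binom{m-k-j}{c_{1}-\delta_{i1},\ldots,c_{d}-\delta_{id}}.
\end{displaymath}
The key observation is that, since $b'\in D$ and $b_{0}+b_{1}+\cdots+b_{d}=n$, one has $c_{1}+\cdots+c_{d}=(n-b_{0})-(k+n-m)=m-k-b_{0}=m-k-j+1$; thus every multinomial coefficient in play is of the pure type, whose upper entry is the sum of its lower entries. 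For such coefficients the identity
\begin{displaymath}
\sum_{i=1}^{d}\binom{M-1}{c_{1},\ldots,c_{i}-1,\ldots,c_{d}}=\binom{M}{c_{1},\ldots,c_{d}},
\end{displaymath}
valid whenever $c_{1}+\cdots+c_{d}=M$, is immediate from $\binom{M-1}{c_{1},\ldots,c_{i}-1,\ldots,c_{d}}=\frac{c_{i}}{M}\binom{M}{c_{1},\ldots,c_{d}}$ together with $\sum_{i}c_{i}=M$. Applying it with $M=m-k-j+1$ collapses the sum to $\binom{m-k-j+1}{c_{1},\ldots,c_{d}}$, and the sign identity $(-1)^{m-k-j}=-(-1)^{m-k-(j-1)}$ rewrites the coefficient as $-(-1)^{m-k-b_{0}}\binom{m-k-b_{0}}{c_{1},\ldots,c_{d}}$ with $b_{0}=j-1$, which is exactly \eqref{eq:5555}.

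The only genuine obstacle is the bookkeeping: I must check that the $w$-terms cancel cleanly and, above all, that every multinomial coefficient is pure, i.e. that $c_{1}+\cdots+c_{d}$ really equals its upper index. This is what guarantees that the short Pascal rule above applies, rather than the longer rule with an extra $\binom{M-1}{c_{1},\ldots,c_{d}}$ term that would appear if an implicit $0$-th slot were present. Once these two points are in place the induction closes immediately.
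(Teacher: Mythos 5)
Your proof is correct and follows essentially the same route as the paper's: descending induction on $b_{0}$, cancellation of the $w$-terms against $f(v_{b})$, and the multinomial Pascal identity $\sum_{i}\binom{M-1}{c_{1},\ldots,c_{i}-1,\ldots,c_{d}}=\binom{M}{c_{1},\ldots,c_{d}}$, whose applicability rests on the same observation $\sum_{l}(b_{l}-b'_{l})=m-k-b_{0}$ that the paper records in its Remark. The only difference is cosmetic: you justify the Pascal identity explicitly and spell out the boundary bookkeeping via the Convention, both of which the paper leaves implicit.
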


\begin{rmk}
  Since $b_0+\cdots+b_d=n$ if $b\in A\cup C$, we have
  $b_1+b_2+\cdots+b_d=n-b_0$. On the other hand, if $b'\in D$, then $b'_1+b'_2+\cdots+b'_d=n-1-b'_0=n-1-(m-1-k)=k+n-m$, hence $b_1-b_1'+b_2-b_2'+\cdots+b_d-b_d'=m-k-b_0$.
\end{rmk}

\begin{proof}
  We use  descending induction on \(b_{0}\).
  Clearly, \eqref{eq:5555} is true if $b_0=m-k-1$.  If
  \eqref{eq:5555} holds for all $b\in A\cup C$ such that
  $1\leq j\leq b_0\leq m-k-1$, then for all \( b\in A\cup C \) such
  that $b_0=j-1$, one has:
  \begin{align*}
    f(v'_b)=&\; f(v_b)-f(v'_{b+e_0-e_1})-\cdots-f(v'_{b+e_0-e_d})\\
    =&\; w_b+w_{b+e_0-e_1}+w_{b+e_0-e_2}+\cdots+w_{b+e_0-e_d}\\
            &-w_{b+e_0-e_1}+(-1)^{m-k-b_{0}-1}\sum_{b'\in D}\binom{m-k-b_0-1}{b_1-1-b_1', b_2-b_2',\cdots, b_d-b_d'}u_{b'}\\
            &-w_{b+e_0-e_2}+(-1)^{m-k-b_{0}-1}\sum_{b'\in D}\binom{m-k-b_0-1}{b_1-b_1', b_2-1-b_2',\cdots, b_d-b_d'}u_{b'}\\
            &-\cdots\\
           &-w_{b+e_0-e_d}+(-1)^{m-k-b_{0}-1}\sum_{b'\in D}\binom{m-k-b_0-1}{b_1-b_1', b_2-b_2',\cdots, b_d-1-b_d'}u_{b'}\\
    =&\; w_b-(-1)^{m-k-b_{0}}\sum_{b'\in D}\sum_{i=1}^d\binom{m-k-b_0-1}{b_1-b_1',b_2-b_2',\ldots,b_i-1-b_i',\ldots,b_d-b_d'}u_{b'}\\
    =&\; w_b-(-1)^{m-k-b_{0}}\sum_{b'\in D}\binom{m-k-b_0}{b_1-b_1',b_2-b_2',\ldots,b_d-b_d'}u_{b'}.
  \end{align*}

  This proves the lemma.
\end{proof}

Therefore, by writing \( f(v'_{b}) \) in row, the matrix of $f$ with respect to the bases $v'_b$ and $w_b,
u_b$ is of the form
 \begin{equation*}
  \begin{array}{cc}
&  \begin{array}{ccclcccc} & &  & A  & & & &D \end{array} 
\\ 
\begin{array}{c}
\\
\\
A
\\
\\
\\
C
\end{array} 
 & \left(
\begin{array}{cccc|c}
1 & 0 & \cdots & 0 & *
\\
0 & 1 & \ddots & \vdots & * 
\\
\vdots &\ddots & \ddots & 0 & \vdots 
\\
0 & \cdots & 0 & 1 & * 
\\
\hline 
0& \cdots & 0 & 0 
&
M
\end{array}
\right)
\end{array} 
\end{equation*}
where the rows of $M$ are indexed by $C$, its columns  by $D$, and the
entry corresponding to \(  b\in C \) and \( b'\in D \)
is
$(-1)^{m-k+1}\binom{m-k}{b_1-b_1',b_2-b_2',\ldots,b_d-b_d'}$.
(One has \( m-k-b_{0}=m-k \) since \( b_{0}=0 \) for \( b\in C \)).

We thus obtain the following proposition.
\begin{proposition}Let \( m\geq n\geq 0 \)
  \begin{enumerate}
  \item Every weight of \(
    H^{d}(m,0,\cdots,0,-n-d) \) is \( \leq (m-n-1,n,0,\cdots,0) \).
  \item For \( (k,s_{2},\cdots,s_{d}) \) such
    that
    \begin{displaymath}
      \nu=(m-n-1,n,0,\cdots,0)-k\alpha_{1}-s_{2}\alpha_{2}-\cdots-s_{d}\alpha_{d}
    \end{displaymath}
    is dominant, the \( \nu \)-weight space of
    \( H^{d}(m,0,\cdots,0,-n-d) \) is isomorphic as an abelian group
    to the cokernel of the matrix
    \begin{equation}
      \label{eq:7e42ab2aaf16c45f}
      \Bigg(\binom{m-k}{b_1-b_1',b_2-b_2',\ldots,b_d-b_d'}\Bigg)_{\substack{b\in
          C\\b'\in D}}
    \end{equation}
    where by setting \( s_{1}=n+k \), we have
    \begin{align*}
      C=&\{b=(0,b_1,\cdots,b_d)\in \mathbb{N}^{d+1}\mid b_1+\cdots+b_d=n,\\
        & b_i\leq s_i-s_{i+1}\quad \text{if} \quad 1\leq i\leq d-1, \quad
          b_d\leq s_d \},\\
      D=&\{b=(m-k-1,b_1,\cdots,b_d)\in
          \mathbb{N}^{d+1}\mid b_1+\cdots+b_d=k+n-m, \quad b_i\leq s_i-s_{i+1}\\
        & \text{if} \quad 1\leq i\leq d-1,\quad  b_d\leq s_d \}.
    \end{align*}
  \end{enumerate}

\end{proposition}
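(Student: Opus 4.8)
The plan is to deduce both assertions directly from the block-triangular description of $f_\nu$ obtained from the change of basis $v'_b$ and the Lemma. Since $H^d(\mu)\cong\coker(f)$ decomposes as $\bigoplus_\nu H^d(\mu)_\nu$ with $H^d(\mu)_\nu\cong\coker(f_\nu)$, it suffices to treat each dominant weight $\nu$ separately, and the two parts of the statement concern, respectively, which $\nu$ can occur and what the $\nu$-weight space is.

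For part (1) I would invoke the computation of \ref{section:nleqm}.1: when $s_1<n$ the matrix of $f_\nu$ with respect to the reverse lexicographic order is lower triangular with $1$'s on the diagonal, so $f_\nu$ is surjective and $\coker(f_\nu)=H^d(\mu)_\nu=0$. Hence every weight $\nu$ occurring in $H^d(\mu)$ satisfies $s_1\geq n$. It then remains to translate this inequality into the dominance order: writing $\nu=(m+n-1)\omega_1-\sum_i s_i\alpha_i$, one computes $(m-n-1,n,0,\ldots,0)-\nu=(s_1-n)\alpha_1+\sum_{i\geq 2}s_i\alpha_i$, which is a non-negative integral combination of simple roots precisely because $s_1\geq n$ and the remaining $s_i\geq 0$; therefore $\nu\leq (m-n-1,n,0,\ldots,0)$, as asserted.

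For part (2) I would fix a dominant $\nu$ with $s_1=n+k\geq n$ and begin from the matrix of $f_\nu$ in the bases $\{v'_b\}$ and $\{w_b,u_b\}$, which by the Lemma has the displayed shape $\left(\begin{smallmatrix} I & * \\ 0 & M\end{smallmatrix}\right)$, with the $A\times A$ block equal to the identity and the $C\times D$ block equal to $M$, whose $(b,b')$ entry is $(-1)^{m-k+1}\binom{m-k}{b_1-b_1',\ldots,b_d-b_d'}$. The crux is a purely linear-algebraic reduction over $\mathbb{Z}$: because the $A\times A$ block is \emph{exactly} the identity, the unipotent integral change of basis of $F_\nu$ sending each $w_b$ ($b\in A$) to $w'_b:=w_b+\sum_{b'\in D}c_{b,b'}u_{b'}$ carries $\operatorname{im}(f_\nu)$ onto the span of $\{w'_b:b\in A\}$ together with the row vectors of $M$, without disturbing the $C$-rows (which vanish on the $w$-coordinates since $w_b=0$ for $b\in C$). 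Consequently $\coker(f_\nu)$ is identified with the quotient of $\mathbb{Z}^D=\langle u_{b'}:b'\in D\rangle$ by the $\mathbb{Z}$-span of the rows of $M$. Finally, since the global factor $(-1)^{m-k+1}$ is a unit, row scaling leaves this span unchanged, and a matrix and its transpose have the same invariant factors; hence this cokernel, as an abelian group, is exactly the cokernel of the multinomial matrix $\bigl(\binom{m-k}{b_1-b_1',\ldots,b_d-b_d'}\bigr)_{b\in C,\,b'\in D}$.

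I expect the main obstacle to be ensuring that this reduction is valid \emph{integrally} rather than only after tensoring with a field. The elimination of the $w_b$-generators and the clearing of the $*$-block must be carried out by $\mathbb{Z}$-invertible operations, and this is legitimate precisely because the $A\times A$ block is the identity matrix, making the change of basis of $F_\nu$ unipotent and hence invertible over $\mathbb{Z}$; any deviation from the exact identity would force denominators and could alter the torsion. Once this integrality point is secured, the remaining steps — the passage from $s_1\geq n$ to the dominance order in part (1), and the removal of the sign by row scaling in part (2) — are routine.
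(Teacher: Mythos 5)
Your argument is in substance the paper's own proof: part (1) is the triangularity computation of \ref{section:nleqm}.1 followed by the same translation of $s_{1}\geq n$ into the dominance order, and part (2) is read off from the block matrix $\left(\begin{smallmatrix}I&*\\ 0&M\end{smallmatrix}\right)$ produced by the change of basis $v'_{b}$ and the Lemma. Your unipotent elimination of the identity block (replacing $w_{b}$ by $w'_{b}=w_{b}+\sum_{b'\in D}c_{b,b'}u_{b'}$, which is an integral change of basis of $F_{\nu}$) is exactly the justification the paper leaves implicit, and your identification $\coker(f_{\nu})\cong\mathbb{Z}^{D}/\langle\text{rows of }M\rangle$ is correct.

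The one step you should delete is the last one: ``a matrix and its transpose have the same invariant factors; hence this cokernel is exactly the cokernel of the multinomial matrix.'' Invariant factors determine only the torsion of a cokernel, not its free rank: for an $r\times c$ integer matrix of rank $\rho$, the cokernel has free rank $r-\rho$ in one convention and $c-\rho$ in the other, so $\coker(M)\cong\coker(M^{T})$ forces $r=c$. Here $M$ is square only on the wall $m=n$; for $m>n$ it is genuinely rectangular (for instance, $k=m-n$ makes $D$ a singleton while $C$ can be large), so the principle you invoke fails in exactly the cases at hand. Fortunately it is also unneeded: the proposition's ``cokernel of the matrix $(\cdot)_{b\in C,\,b'\in D}$'' means the quotient of $\mathbb{Z}^{D}$ by the span of its rows --- the paper writes all matrices as acting on rows --- which is precisely what you established. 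This reading is in fact forced: $H^{d}(\mu)$ is torsion for $m\geq n$ (as the paper notes right after the proposition, via Borel--Weil--Bott), whereas the other reading, the cokernel of a map $\mathbb{Z}^{D}\to\mathbb{Z}^{C}$, would in general have nonzero free rank $|C|-|D|$.
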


In this case, we also know that \( H^{d}(\mu) \) is a torsion abelian
group, since \( H^{d}(\mu)_{\free}\otimes\mathbb{Q}\cong
H^{d}_{\mathbb{Q}}(\mu)=0 \) by the Borel-Weil-Bott theorem.
\section{The case \( n>m \)}\label{section:n>m}
\paragraph{\ref{section:n>m}.1}
If $s_1\leq m-1<n-1$, then a monomial $(b_0,b_1,\cdots,b_d)$ in
$E_{\nu}$ satisfies $b_0\geq 1$ since $b_1+\cdots+b_d\leq s_1<n$ and
$b_0+b_1+\cdots+b_d=n$. Set
\begin{multline*}
A=\{b=(b_0,b_1,\cdots,b_d)\in
\mathbb{N}^{d+1}\mid b_0+b_1+\cdots+b_d=n,\quad 1\leq b_0\leq
m+n-1-s_1,\\\quad b_i\leq s_i-s_{i+1}\quad \text{if} \quad 1\leq i\leq
d-1,\quad b_d\leq s_d \}.
\end{multline*}
Then we take as a basis for \( E_{\nu} \) the set
 $\{v_b\mid  b\in A\}$,  where $v_b$ is the
 monomial determined by $b$. Since \(
 m+n-1-s_{1}>n-1 \), we have
\begin{multline*}
A=\{b=(b_0,b_1,\cdots,b_d)\in
\mathbb{N}^{d+1}\mid b_0+b_1+\cdots+b_d=n,\quad 1\leq b_0\leq
n,\\\quad b_i\leq s_i-s_{i+1}\quad \text{if} \quad 1\leq i\leq
d-1, \quad b_d\leq s_d \}.
\end{multline*}
 For
 $0\leq i\leq d$, set
 $e_i=(0,0,\cdots,1,\cdots,0)\in\mathbb{Z}^{d+1}$ where $1$ is at the 
 $i$-th position.  
Then we take as a basis for \( F_{\nu} \) the set  $\{w_{b}\mid  b\in
A\} $,  where $w_b$ is the monomial determined by
 $b-e_0$. We set \( w_{b}=0 \) whenever \( b\notin
 A \). With these notations, we have for all \( u\in A \):
$$
f(v_u)=w_{u}+w_{u+e_0-e_1}+\cdots+w_{u+e_0-e_d}.
$$ 

We equip the set $A\subset\mathbb{N}^{d+1}$
with the reverse lexicographic  order. Then $u+e_0-e_i<u$ for all
$1\leq i\leq d$, and hence the matrix of $f$ with respect to basis $v_b$
and $w_b$ is lower triangular, and its  entries on the diagonal
are all \( 1 \). Hence the cokernel $H^d(\mu)_{\nu}$ of \( f_{\nu} \)
is zero if \( s_{1}<m \).

This proves that every weight of \( H^{d}(\mu) \) is \( \leq (m+n-1)\omega_{1}-m\alpha_{1} \).

\medskip
\paragraph{\ref{section:n>m}.2}
If \( s_{1}\geq m \), 
set $s_1=m+k$ with $k\geq0$. Let
\begin{align*}
 A=&\{b=(b_0,b_1,\cdots,b_d)\in \mathbb{N}^{d+1}\mid b_0+b_1+\cdots+b_d=n,\quad 1\leq b_0\leq n-1-k,\\
&b_i\leq s_i-s_{i+1}\quad \text{if} \quad 1\leq i\leq d-1, \quad b_d\leq s_d \},\\ 
C=&\{b=(0,b_1,\cdots,b_d)\in \mathbb{N}^{d+1}\mid b_1+\cdots+b_d=n,\\
& b_i\leq s_i-s_{i+1}\quad \text{if} \quad 1\leq i\leq d-1, \quad
                                                                        b_d\leq s_d \},\\ D=&\{b=(n-k-1,b_1,\cdots,b_d)\in \mathbb{N}^{d+1}\mid b_1+\cdots+b_d=k, \quad b_i\leq s_i-s_{i+1}\\
& \text{if} \quad 1\leq i\leq d-1,\quad b_d\leq s_d \}.
\end{align*}

We take the set $\{v_b\mid b\in A\cup C\}$ as the basis of $E_{\nu}$ where
$v_b$ is the monomial determined by $b=(b_0,\cdots,b_d)$. We take the
set $\{w_b\mid b\in A\}\cup \{u_b\mid b\in D\}$ as the basis of $F_{\nu}$
where $w_b$ is the monomial determined by $b-e_0$ and $u_b$ is the monomial determined by $b\in D$. 
\begin{convention}
  Let $b\in \mathbb{Z}^{d+1}$. If $b\notin A\cup C$,  we set
  $v_b=0$. If $b\notin A$, we set $w_b=0$. If $b\notin D$, we set $u_b=0$.
\end{convention}

With these notations, we have
\begin{equation}
  \begin{aligned}
    f(v_b)&=w_b+w_{b+e_0-e_1}+w_{b+e_0-e_2}+\cdots w_{b+e_0-e_d}\quad \text{if} \quad b_0\leq n-k-2\\
    f(v_b)&=w_b+u_{b-e_1}+u_{b-e_2}+\cdots+u_{b-e_d} \quad
    \text{if}\quad b_0=n-k-1.
  \end{aligned}
\end{equation}

Now we make a change of basis of  $E_{\nu}$ by defining
$v'_b$ for all $b\in A\cup C$ by:

\begin{itemize}
\item if $b_0=n-k-1$, we set $v'_b=v_b$;

 \item  if $b_0=n-k-2$, we set
  \begin{equation}
    v'_b=v_b-v_{b+e_0-e_1}-v_{b+e_0-e_2}-\cdots-v_{b+e_0-e_d} .
  \end{equation}
\end{itemize}

Let \( j\geq 1 \).  If we have already defined  $v'_b$ for all $b$
such that
$j\leq b_0\leq n-k-2$, we set
\begin{equation}
  v'_b=v_b-v'_{b+e_0-e_1}-v'_{b+e_0-e_2}-\cdots-v'_{b+e_0-e_d} \quad \text{if}\quad b_0=j-1.
\end{equation}
Hence $v'_b$ is defined for all $b\in A\cup C$.

Therefore, if $b_0=n-k-1$, we have:
\begin{equation}
\begin{aligned}
    f(v'_b)&=w_b+u_{b-e_1}+u_{b-e_2}+\cdots+u_{b-e_d}\\
    &=w_{b}+\binom{1}{1,0,\cdots,0}u_{b-e_{1}}+\binom{1}{0,1,0,\cdots,0}u_{b-e_{2}}+\cdots+\binom{1}{0,\cdots,0,1}u_{b-e_{d}}\\
    &=w_{b}+\sum_{b'\in D}\binom{1}{b_{1}-b_{1}',b_{2}-b_{2}',\cdots,b_{d}-b_{d}'}u_{b'}.
  \end{aligned}
\end{equation}
\begin{lemma}
  For all \( b\in A\cup C \), we have
  \begin{equation}
  \label{eq:5555*}
  f(v'_b)=w_b-(-1)^{n-k-b_{0}}\sum_{b'\in D}\binom{n-k-b_0}{b_1-b_1',b_2-b_2',\ldots,b_d-b_d'}u_{b'}.
\end{equation}
\end{lemma}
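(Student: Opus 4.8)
The plan is to establish \eqref{eq:5555*} by descending induction on $b_0$, running the same argument that proves \eqref{eq:5555} in Section~\ref{section:nleqm} but with $m-k$ systematically replaced by $n-k$. The base case $b_0=n-k-1$ is already contained in the display immediately preceding the statement, where $v'_b=v_b$ and the only nonzero coefficients $\binom{1}{b_1-b_1',\ldots,b_d-b_d'}$ are those with $b'=b-e_i$; this matches \eqref{eq:5555*} at $n-k-b_0=1$, where $-(-1)^{1}=+1$.

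For the inductive step I would assume \eqref{eq:5555*} for every $b\in A\cup C$ whose first coordinate lies in $[j,\,n-k-1]$ and treat $b$ with $b_0=j-1$. Applying $f$ to the recursion $v'_b=v_b-\sum_{i=1}^{d}v'_{b+e_0-e_i}$ and inserting both $f(v_b)=w_b+\sum_{i=1}^{d}w_{b+e_0-e_i}$ (valid since $b_0\leq n-k-2$) and the induction hypothesis for each $f(v'_{b+e_0-e_i})$ — legitimate because the translate $b+e_0-e_i$ has first coordinate $b_0+1\leq n-k-1$ — one sees that the $w_{b+e_0-e_i}$ contributions cancel in pairs, leaving $w_b$ together with a sum over $b'\in D$. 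The coefficient of $u_{b'}$ in that sum is $(-1)^{n-k-b_0-1}\sum_{i=1}^{d}\binom{n-k-b_0-1}{b_1-b_1',\ldots,b_i-1-b_i',\ldots,b_d-b_d'}$, the sign coming from the minus signs in the recursion.

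The computation then closes on the multinomial Pascal identity
\begin{equation*}
  \sum_{i=1}^{d}\binom{N-1}{c_1,\ldots,c_i-1,\ldots,c_d}=\binom{N}{c_1,\ldots,c_d},\qquad N=\textstyle\sum_{i=1}^{d}c_i,
\end{equation*}
taken with $N=n-k-b_0$ and $c_i=b_i-b_i'$. The hypothesis $N=\sum_i c_i$ is exactly the relation $\sum_i(b_i-b_i')=n-k-b_0$, which holds here because $b_1'+\cdots+b_d'=k$ for $b'\in D$ while $b_1+\cdots+b_d=n-b_0$; this is the $n>m$ analogue of the Remark following \eqref{eq:5555}. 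Collapsing the $d$ summed coefficients into $\binom{n-k-b_0}{b_1-b_1',\ldots,b_d-b_d'}$ and absorbing the extra sign produces precisely the right-hand side of \eqref{eq:5555*}.

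I do not expect a genuine obstacle, as this is the verbatim transport of the earlier proof. The only points demanding care are purely bookkeeping: checking that the translates $b+e_0-e_i$ stay in $A\cup C$ so that $v'_{b+e_0-e_i}$ is defined, and confirming that out-of-range terms are harmless — either suppressed by the convention $u_{b'}=0$ for $b'\notin D$, or automatically zero because a negative lower entry kills the multinomial coefficient. The hard part, if any, is simply keeping the index shifts inside the multinomial coefficients aligned with the translations $b\mapsto b+e_0-e_i$.
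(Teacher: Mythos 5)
Your proposal is correct and follows exactly the paper's own argument: a descending induction on $b_0$ starting from the case $b_0=n-k-1$, expanding $f(v'_b)=f(v_b)-\sum_{i=1}^{d}f(v'_{b+e_0-e_i})$, cancelling the $w_{b+e_0-e_i}$ terms, and collapsing the resulting sum via the multinomial Pascal identity, with the degree count $\sum_i(b_i-b_i')=n-k-b_0$ justified exactly as in the remark following the lemma. Your attention to the out-of-range conventions ($u_{b'}=0$ for $b'\notin D$, vanishing multinomials with negative entries) is, if anything, slightly more explicit than the paper's treatment.
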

\begin{rmk}
  Since $b_0+\cdots+b_d=n$ if $b\in A\cup C$, we have
  $b_1+b_2+\cdots+b_d=n-b_0$. On the other hand, if $b'\in D$, then $b'_1+b'_2+\cdots+b'_d=n-1-b'_0=n-1-(n-1-k)=k$, hence $b_1-b_1'+b_2-b_2'+\cdots+b_d-b_d'=n-k-b_0$.
\end{rmk}

\begin{proof} We use  descending induction on \(b_{0}\)
  Clearly, \eqref{eq:5555*} is true if $b_0=n-k-1$.  If
  \eqref{eq:5555*} holds for all $b\in A\cup C$ such that
  $1\leq j\leq b_0\leq n-k-1$, then for all \( b\in A\cup C \) such
  that $b_0=j-1$, one has:
  \begin{align*}
    f(v'_b)=&f(v_b)-f(v'_{b+e_0-e_1})-\cdots-f(v'_{b+e_0-e_d})\\
    =&w_b+w_{b+e_0-e_1}+w_{b+e_0-e_2}+\cdots+w_{b+e_0-e_d}\\
            &-w_{b+e_0-e_1}+(-1)^{n-k-b_{0}-1}\sum_{b'\in D}\binom{n-k-b_0-1}{b_1-1-b_1', b_2-b_2',\cdots, b_d-b_d'}u_{b'}\\
            &-w_{b+e_0-e_2}+(-1)^{n-k-b_{0}-1}\sum_{b'\in D}\binom{n-k-b_0-1}{b_1-b_1', b_2-1-b_2',\cdots, b_d-b_d'}u_{b'}\\
            &-\cdots\\
            &-w_{b+e_0-e_d}+(-1)^{n-k-b_{0}-1}\sum_{b'\in
              D}\binom{n-k-b_0-1}{b_1-b_1', b_2-b_2',\cdots,
              b_d-1-b_d'}u_{b'}\\
  \end{align*}
  \begin{align*}
    =&w_b-(-1)^{n-k-b_{0}}\sum_{b'\in D}\sum_{i=1}^d\binom{n-k-b_0-1}{b_1-b_1',b_2-b_2',\ldots,b_i-1-b_i',\ldots,b_d-b_d'}u_{b'}\\
    =&w_b-(-1)^{n-k-b_{0}}\sum_{b'\in D}\binom{n-k-b_0}{b_1-b_1',b_2-b_2',\ldots,b_d-b_d'}u_{b'}.
  \end{align*}
  This proves the lemma.
\end{proof}
Therefore, the matrix of $f$ with respect to the bases $v'_b$ and $w_b,
u_b$ is of the form
\begin{equation*}
  \begin{array}{cc}
&  \begin{array}{ccclcccc} & &  & A  & & & &D \end{array} 
\\ 
\begin{array}{c}
\\
\\
A
\\
\\
\\
C
\end{array} 
 & \left(
\begin{array}{cccc|c}
1 & 0 & \cdots & 0 & *
\\
0 & 1 & \ddots & \vdots & * 
\\
\vdots &\ddots & \ddots & 0 & \vdots 
\\
0 & \cdots & 0 & 1 & * 
\\
\hline 
0& \cdots & 0 & 0 
&
M
\end{array}
\right)
\end{array} 
\end{equation*}
where the rows of $M$ are indexed by $C$, its columns  by $D$, and the
entry corresponding to \(  b\in C \) and \( b'\in D \)
is $(-1)^{n-k+1}\binom{n-k}{b_1-b_1',b_2-b_2',\ldots,b_d-b_d'}$.(One
has \( n-k-b_{0}=n-k \) since \( b_{0}=0
\) for \( b\in C \)).

We thus obtain the following proposition.
\begin{proposition} Let \( n>m\geq 0 \).
  \begin{enumerate}
  \item Every weight of \(
    H^{d}(m,0,\cdots,0,-n-d) \) is \( \leq (n-m-1,m,0,\cdots,0) \).
  \item For \( (k,s_{2},\cdots,s_{d}) \) such
    that
    \begin{displaymath}
      \nu=(n-m-1,m,0,\cdots,0)-k\alpha_{1}-s_{2}\alpha_{2}-\cdots-s_{d}\alpha_{d}
    \end{displaymath}
    is dominant, the \( \nu \)-weight space of
    \( H^{d}(m,0,\cdots,0,-n-d) \) is isomorphic as an abelian group
    to the cokernel of the matrix
    \begin{equation}
      \Bigg(\binom{n-k}{b_1-b_1',b_2-b_2',\ldots,b_d-b_d'}\Bigg)_{\substack{b\in
          C\\b'\in D}}\label{eq:36c4f9bf14becd66}
    \end{equation}
    where by setting \( s_{1}=m+k \), we have
    \begin{align*}
      C=&\{b=(0,b_1,\cdots,b_d)\in \mathbb{N}^{d+1}\mid b_1+\cdots+b_d=n,\\
        & b_i\leq s_i-s_{i+1}\quad \text{if} \quad 1\leq i\leq d-1, \quad
          b_d\leq s_d \},\\ D=&\{b=(n-k-1,b_1,\cdots,b_d)\in \mathbb{N}^{d+1}\mid b_1+\cdots+b_d=k, \quad b_i\leq s_i-s_{i+1}\\
        & \text{if} \quad 1\leq i\leq d-1, \quad b_d\leq s_d \}.     
    \end{align*}
  \end{enumerate}
\end{proposition}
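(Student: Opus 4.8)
The plan is to assemble the two ingredients already established in Section~\ref{section:n>m}: the vanishing computation of paragraph \ref{section:n>m}.1 and the explicit block matrix produced by the change of basis in paragraph \ref{section:n>m}.2 together with its Lemma \eqref{eq:5555*}. No new input is needed; the whole content is to package these correctly and to translate between the $\alpha$-coordinates and the $\omega$-coordinates.

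For part~(1), I would first rewrite the bound $(m+n-1)\omega_1-m\alpha_1$ in terms of fundamental weights. Using $\alpha_1=2\omega_1-\omega_2$ one gets
$(m+n-1)\omega_1-m\alpha_1=(n-m-1)\omega_1+m\omega_2$, that is, the weight $(n-m-1,m,0,\ldots,0)$ in the coordinate notation. Paragraph \ref{section:n>m}.1 shows that for every dominant $\nu$ with $s_1<m$ the map $f_\nu$ is, after ordering $A$ reverse-lexicographically, lower triangular with $1$'s on the diagonal, so $H^d(\mu)_\nu=\coker(f_\nu)=0$. Since every weight of $H^d(\mu)$ is $\le(m+n-1)\omega_1$ and vanishes unless $s_1\ge m$, part~(1) follows at once.

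For part~(2), fix a dominant $\nu=(n-m-1,m,0,\ldots,0)-k\alpha_1-s_2\alpha_2-\cdots-s_d\alpha_d$, so that $s_1=m+k$ with $k\ge 0$. The recursively defined vectors $v'_b$ ($b\in A\cup C$) form a basis of $E_\nu$, being obtained from the $v_b$ by a unitriangular change of basis with respect to the filtration by $b_0$. By the Lemma \eqref{eq:5555*}, in the bases $\{v'_b\}$ and $\{w_b,u_b\}$ the map $f_\nu$ has the displayed block shape: for $b\in A$ (so $b_0\ge 1$) the term $w_b$ supplies the identity block on the $A$-part, while for $b\in C$ (so $b_0=0$) one has $w_b=0$ by the Convention, leaving exactly the block $M$ with rows indexed by $C$, columns by $D$, and entries $(-1)^{n-k+1}\binom{n-k}{b_1-b_1',\ldots,b_d-b_d'}$.

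Finally I would argue that the cokernel of this block matrix is the cokernel of $M$. The invertible identity block lets us eliminate the generators $w_b$ ($b\in A$) from $F_\nu/\image(f_\nu)$: each relation $f(v'_b)=0$ with $b\in A$ expresses $w_b$ through the $u_{b'}$, after which only the relations coming from $b\in C$ survive, giving $F_\nu/\image(f_\nu)\cong \mathbb{Z}^{D}$ modulo the rows of $M$, i.e.\ $\coker(M)$ as an abelian group. Since the common scalar $(-1)^{n-k+1}$ is a unit in $\mathbb{Z}$, it may be dropped, so $H^d(\mu)_\nu=\coker(f_\nu)$ (by \eqref{eq:8d3f39d1d8e8a856} restricted to the $\nu$-weight space) is isomorphic to the cokernel of \eqref{eq:36c4f9bf14becd66}, as claimed. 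There is no serious obstacle here; the only point requiring care is the bookkeeping of the Convention $w_b=0$ for $b\in C$, which is precisely what decouples the $M$-block from the identity block and makes the reduction to $\coker(M)$ clean.
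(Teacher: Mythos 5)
Your proposal is correct and essentially identical to the paper's own proof: part (1) is the triangularity argument of \ref{section:n>m}.1 combined with the conversion \( (m+n-1)\omega_{1}-m\alpha_{1}=(n-m-1,m,0,\cdots,0) \), and part (2) is the change of basis \( v'_{b} \), the lemma \eqref{eq:5555*}, and the resulting block matrix of \ref{section:n>m}.2, restricted via \eqref{eq:8d3f39d1d8e8a856} to the \( \nu \)-weight space. Your only addition is to spell out the elimination of the unit diagonal block (and the harmless sign \( (-1)^{n-k+1} \)) identifying the cokernel with \( \coker(M) \), a step the paper leaves implicit.
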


\section{On the wall}
We now suppose that \( m=n \). Then we have proved that every dominant
weight of
 \( H^{d}(\mu) \) is of the form \( \nu=(2n-1)\omega_{1}-s_{1}\alpha_{1}-s_{2}\alpha_{2}-\cdots-s_{d}\alpha_{d} \)
with \( s_{1}\geq n \) and \( s_{1}\geq s_{2}\geq\cdots\geq s_{d}
\). Set \( h_{i}=s_{i}-s_{i+1} \) for \( 1\leq d-1 \) and \(
h_{d}=s_{d} \), then the fact that \( \nu \) is dominant  implies that
\( h_{1}\geq
h_{2}\geq \cdots\geq h_{d}\geq 0\). Set \( k=s_{1}-n\geq 0
\). Then as a \(
\mathbb{Z} \)-module,
the weight space \( H^{d}(\mu)_{\nu} \) is isomorphic to the cokernel
of the matrix \( M \) whose rows are indexed by
\begin{displaymath}
  C=\{b=(0,b_1,\cdots,b_d)\in \mathbb{N}^{d+1}\mid b_1+\cdots+b_d=n,\;
 b_i\leq h_{i} \}
\end{displaymath}
and whose columns are indexed by
\begin{displaymath}
 D=\{b=(n-k-1,b_1,\cdots,b_d)\in \mathbb{N}^{d+1}\mid b_1+\cdots+b_d=k,\; b_i\leq h_{i}\}
\end{displaymath}
and the entry corresponding to \( b\in C \) and \( b'\in D \)
is $\binom{n-k}{b_1-b_1',b_2-b_2',\ldots,b_d-b_d'}$.

This is a square matrix. In fact, there exists a bijection \(
\Phi:C\to D \) defined by \(
\Phi((0,b_{1},\cdots,b_{d}))=(n-k-1,h_{1}-b_{1},h_{2}-b_{2},\cdots,h_{d}-b_{d})
\) since \( h_{1}+h_{2}+\cdots+h_{d}=s_{1}=n+k \). The determinant of
this matrix has been calculated by Proctor (\cite{Proc90}
Cor.1). More
precisely, set \( h=(h_{1},\cdots,h_{d}) \) and for all \( \ell\geq 0
\), let
\begin{displaymath}
  C(d,h,\ell)=\{(b_{1},\cdots,b_{d})\mid b_{1}+\cdots+b_{d}=\ell,\; b_{i}\leq
  h_{i}\}.
\end{displaymath}
  For each \( \ell \), set \(
\delta_{\ell}=|C(d,h,\ell)|-|C(d,h,\ell-1)| \) (we use the convention
that  \( C(d,h,-1)=\emptyset \)) and \(
S_{\ell}=|C(d,h,0)|+|C(d,h,1)|+\cdots+|C(d,h,\ell)| \). Fix some
ordering of the elements of \( C(d,h,k) \). Since there is a bijection from \( C(d,h,k) \) to \( C(d,h,n) \) via \(
(b_{1},\cdots,b_{d})\mapsto (h_{1}-b_{1},\cdots,h_{d}-b_{d}) \), we
can order the elements of \( C(d,h,n) \) with the same ordering.
With these notations, one has the following 
\begin{proposition}[Proctor]\label{prop:proctor}
  If \( d\geq 1 \) and \( h_{1},\cdots,h_{d}\geq 1 \), then
    \begin{multline}
      \det
      \Bigg(\binom{n-k}{b_1-b_1',b_2-b_2',\ldots,b_d-b_d'}\Bigg)_{\substack{b\in
          C(d,h,n)\\b'\in C(d,h,k)}} \\
      =(-1)^{S_{k'}}\frac{\prod_{b'\in
          C(d,h,k)}b'_{1}!b'_{2}!\cdots b'_{d}!}{\prod_{b\in
          C(d,h,n)}b_{1}!b_{2}!\cdots b_{d}!}
      \prod_{\ell=0}^{k}[(\ell+1)(\ell+2)\cdots(\ell+n-k)]^{\delta_{k-\ell}},
\label{eq:9a0eb8726687ebf7}
\end{multline}
where \( k' \) is the largest odd integer \( \leq k \).
\end{proposition}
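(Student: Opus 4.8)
The plan is to recognize the matrix in \eqref{eq:9a0eb8726687ebf7} as the matrix of a \emph{hard Lefschetz operator} and to evaluate its determinant through the representation theory of \( \mathfrak{sl}_2 \). Set \( N=h_{1}+\cdots+h_{d}=n+k \) and let \( R=\mathbb{Z}[X_{1},\ldots,X_{d}]/(X_{1}^{h_{1}+1},\ldots,X_{d}^{h_{d}+1}) \) be the truncated polynomial algebra, graded by total degree, so that the degree-\( \ell \) part \( R_{\ell} \) has the monomials indexed by \( C(d,h,\ell) \) as a basis. Writing \( L=X_{1}+\cdots+X_{d} \), multiplication by \( L^{n-k} \) sends \( X^{b'} \) to \( \sum_{b}\binom{n-k}{b_{1}-b_{1}',\ldots,b_{d}-b_{d}'}X^{b} \), the multinomial coefficient being zero unless \( b\geq b' \) coordinatewise and \( \sum(b_{i}-b_{i}')=n-k \). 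Hence, in the monomial bases, the matrix of \( L^{n-k}\colon R_{k}\to R_{n}=R_{N-k} \) is exactly \eqref{eq:9a0eb8726687ebf7}, and since \( k \) and \( N-k \) are the two degrees symmetric about \( N/2 \) with defect \( n-k=N-2k \), this is precisely the Lefschetz map in complementary degrees.

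Next I would put an \( \mathfrak{sl}_2 \)-action on \( R\otimes\mathbb{Q} \). Each factor \( \mathbb{Q}[X_{i}]/(X_{i}^{h_{i}+1}) \) is the simple module of highest weight \( h_{i} \) with \( e \) acting as multiplication by \( X_{i} \), so \( R\otimes\mathbb{Q} \) is an \( \mathfrak{sl}_2 \)-module with raising operator \( e=L \); decomposing it into isotypic components, the number of simple summands whose lowest degree is \( j \) (equivalently of top weight \( N-2j \)) equals the defect \( \delta_{j}=|C(d,h,j)|-|C(d,h,j-1)| \). On such a summand the map \( e^{n-k} \) restricts to an isomorphism from its degree-\( k \) line onto its degree-\( (N-k) \) line, multiplying a suitably normalized generator by the rising factorial \( (k-j+1)(k-j+2)\cdots(n-j) \). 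Taking the product over all summands reproduces the factor \( \prod_{\ell=0}^{k}[(\ell+1)\cdots(\ell+n-k)]^{\delta_{k-\ell}} \) after re-indexing \( \ell=k-j \), and in particular shows \( \det M\neq 0 \), recovering the squareness and invertibility over \( \mathbb{Q} \) already seen via the bijection \( \Phi \).

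The remaining work is bookkeeping between the monomial basis and the \( \mathfrak{sl}_2 \)-adapted basis. Comparing the two accounts for the prefactor \( \bigl(\prod_{b'\in C(d,h,k)}b_{1}'!\cdots b_{d}'!\bigr)\big/\bigl(\prod_{b\in C(d,h,n)}b_{1}!\cdots b_{d}!\bigr) \), which measures the failure of the monomials to be the canonical weight vectors in each simple summand; one can check this directly in the univariate case \( d=1 \), where the scalar and the prefactor combine to give \( \det M=1 \) as they must. The genuinely delicate point, and the main obstacle, is the sign \( (-1)^{S_{k'}} \): it arises from the permutation relating the orderings of the rows \( C(d,h,n) \) and columns \( C(d,h,k) \) through the reflection \( (b_{1},\ldots,b_{d})\mapsto(h_{1}-b_{1},\ldots,h_{d}-b_{d}) \) together with the orientation of the weight decomposition, and tracking it requires a careful count of weight-space dimensions up to the largest odd index \( k' \). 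Since this determinant is exactly the content of Proctor's Corollary~1, one may alternatively verify that the matrix \eqref{eq:9a0eb8726687ebf7} satisfies the hypotheses of \cite{Proc90} and invoke that result directly; the \( \mathfrak{sl}_2 \) argument above is essentially a reconstruction of Proctor's method.
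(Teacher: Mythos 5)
Your hard-Lefschetz reading of the matrix is the right picture: it is indeed the matrix of multiplication by \( L^{n-k} \) from \( R_{k} \) to \( R_{n} \) in \( R=\mathbb{Z}[X_{1},\ldots,X_{d}]/(X_{1}^{h_{1}+1},\ldots,X_{d}^{h_{d}+1}) \), and the \( \mathfrak{sl}_2 \)-decomposition is precisely Proctor's own method. But as a proof the proposal has genuine gaps. Within the reconstruction itself, the two items that constitute the actual content of the determinant evaluation are both conceded rather than derived: the factorial prefactor is attributed to ``bookkeeping'' between the monomial basis and an \( \mathfrak{sl}_2 \)-adapted basis (verifying the case \( d=1 \) does not establish it for general \( d \)), and the sign \( (-1)^{S_{k'}} \) is explicitly declared ``the main obstacle'' and left untracked. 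A determinant evaluation whose scalar normalization and sign are both admitted to be open is not a proof; it reduces the statement back to \cite{Proc90}.

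The second gap is the one that matters even if you take your fallback route of invoking Proctor's Corollary 1 directly, which is exactly what the paper does: you never verify Proctor's hypothesis. That corollary requires \( k<\tfrac{1}{2}R \) where \( R=h_{1}+\cdots+h_{d}=n+k \), i.e.\ \( k<n \); correspondingly, your own argument needs \( k\le \tfrac{1}{2}(n+k) \) for the multiplicities \( \delta_{k-\ell} \) to be nonnegative and for \( e^{n-k} \) to be an isomorphism between the stated weight lines of each summand. This inequality is not automatic from \( d\ge 1 \) and \( h_{i}\ge 1 \); it is where the paper's proof does its only real work, using that the proposition sits in the context where \( \nu=(2n-1)\omega_{1}-s_{1}\alpha_{1}-\cdots-s_{d}\alpha_{d} \) is dominant. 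Dominance gives \( 0\le 2n-1-2s_{1}+s_{2}=2n-1-s_{1}-h_{1} \), and \( h_{1}\ge 1 \) then forces \( 0\le 2n-2-s_{1}=n-2-k \), hence \( k\le n-2 \), which yields \( k<\tfrac{1}{2}(n+k) \). Without this step neither the citation of Proctor nor the \( \mathfrak{sl}_2 \) argument is licensed, so you must add it; and if you keep the reconstruction instead of the citation, you must actually carry out the basis comparison giving the prefactor and the count giving the sign.
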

\begin{proof}
  Basically, this is just \cite{Proc90} Cor.1. The only thing we need
  to verify is that \( k<\frac{1}{2}(n+k) \) (corresponding to the
  hypothesis \( k<\frac{1}{2}R \) in the article of
  Proctor). But since \( \nu=(2n-1)\omega_{1}-s_{1}\alpha_{1}-s_{2}\alpha_{2}-\cdots-s_{d}\alpha_{d} \) is
dominant, we have  \( 0\leq 2n-1-2s_{1}+s_{2}=2n-1-s_{1}-h_{1}
\). Since \( h_{1}\geq 1 \), one has \(0 \leq
2n-1-s_{1}-1=n-2-k \), hence \( k\leq n-2 \), which implies \( k<\frac{1}{2}(n+k) \).
\end{proof}

In fact, the hypothesis \( h_{1},\cdots,h_{d}\geq 1 \) in the
proposition is not necessary. In our setting, we have \( h_{1}\geq
h_{2}\geq\cdots\geq h_{d}\geq 0 \). Let \( d_{0} \) be the largest integer
such that \( h_{d_{0}}\geq 1 \), then we have \( h_{1}\geq \cdots\geq
h_{d_{0}}\geq 1 \) and \( h_{d_{0}+1}=\cdots=h_{d}=0 \). Set \(
\bar{h}=(h_{1},\cdots,h_{d_{0}}) \), then \(
C(d,h,\ell)=C(d_{0},\bar{h},\ell)\times
\{(\underbrace{0,\cdots,0}_{d-d_{0} \text{ times}})\} \)  for all \( \ell
\) (intuitively, the set \( C(d,h,\ell) \) is just the set \(
C(d_{0},\bar{h},\ell) \), with some extra zeros added to each element
on the tail). Using  \autoref{prop:proctor} for \( d_{0} \) and \( \bar{h} \), we get
\begin{align*}
  &\det
      \Bigg(\binom{n-k}{b_1-b_1',b_2-b_2',\ldots,b_d-b_d'}\Bigg)_{\substack{b\in
          C(d,h,n)\\b'\in C(d,h,k)}} \\
      &=\det
      \Bigg(\binom{n-k}{b_1-b_1',b_2-b_2',\ldots,b_{d_{0}}-b_{d_{0}}',0,\cdots,0}\Bigg)_{\substack{b\in
        C(d,h,n)\\b'\in C(d,h,k)}} \\
  &=\det
      \Bigg(\binom{n-k}{b_1-b_1',b_2-b_2',\ldots,b_{d_{0}}-b_{d_{0}}'}\Bigg)_{\substack{b\in
    C(d_{0},\bar{h},n)\\b'\in C(d_{0},\bar{h},k)}} \\
\end{align*}
\begin{align*}
  &=(-1)^{S_{k'}}\frac{\prod_{b'\in
          C(d_{0},\bar{h},k)}b'_{1}!b'_{2}!\cdots b'_{d_{0}}!}{\prod_{b\in
          C(d_{0},\bar{h},n)}b_{1}!b_{2}!\cdots b_{d_{0}}!}
    \prod_{\ell=0}^{k}[(\ell+1)(\ell+2)\cdots(\ell+n-k)]^{\delta_{k-\ell}}\\
  &=(-1)^{S_{k'}}\frac{\prod_{b'\in
          C(d_{0},\bar{h},k)}b'_{1}!b'_{2}!\cdots b'_{d_{0}}!0!\cdots 0!}{\prod_{b\in
          C(d_{0},\bar{h},n)}b_{1}!b_{2}!\cdots b_{d_{0}}!0!\cdots 0!}
    \prod_{\ell=0}^{k}[(\ell+1)(\ell+2)\cdots(\ell+n-k)]^{\delta_{k-\ell}}\\
  &=(-1)^{S_{k'}}\frac{\prod_{b'\in
          C(d,h,k)}b'_{1}!b'_{2}!\cdots b'_{d}!}{\prod_{b\in
          C(d,h,n)}b_{1}!b_{2}!\cdots b_{d}!}
      \prod_{\ell=0}^{k}[(\ell+1)(\ell+2)\cdots(\ell+n-k)]^{\delta_{k-\ell}}.
\end{align*}
Therefore, we can get rid of the hypothesis \( h_{1},\cdots,h_{d}\geq 1
\). Moreover, by the definitions, we have \((b_{1},\cdots,b_{d})\in
C(d,h,n) \) if and only if \( (0,b_{1},\cdots,b_{d})\in C \), and 
 \( (b_{1},\cdots,b_{d})\in C(d,h,k) \) if and only if \((
 n-k-1,b_{1},\cdots,b_{d})\in D \). Hence the matrix
\eqref{eq:7e42ab2aaf16c45f} is the same as the one in
\eqref{eq:9a0eb8726687ebf7}. On the other hand, since \(
H^{d}(n,0,\cdots,0,-n-d) \) is a \( \mathbb{Z} \)-module of finite
type, all maximal weights are dominant. But a dominant weight is \(
\leq (-1,n,0,\cdots,0) \) if and only if it is \( \leq
(0,n-2,1,0,\cdots,0) \), we thus obtain the following corollary.
\begin{cor}\label{cor:onthewall}
  Let \( n\geq 0 \).
  \begin{enumerate}
  \item Every weight of \(
    H^{d}(n,0,\cdots,0,-n-d) \) is \( \leq (0,n-2,1,0, \cdots,0) \).
  \item For \( (k,s_{2},\cdots,s_{d}) \) such
    that
    \begin{displaymath}
      \nu=(-1,n,0,\cdots,0)-k\alpha_{1}-s_{2}\alpha_{2}-\cdots-s_{d}\alpha_{d}
    \end{displaymath}
    is dominant, the \( \nu \)-weight space of
    \( H^{d}(n,0,\cdots,0,-n-d) \) is isomorphic as an abelian group
    to the cokernel of a matrix with integer coefficients whose
    determinant has absolute value  
    \begin{equation}
      \label{eq:a91238c5d6d67c77}
      \frac{\prod_{b'\in
          C(d,h,k)}b'_{1}!b'_{2}!\cdots b'_{d}!}{\prod_{b\in
          C(d,h,n)}b_{1}!b_{2}!\cdots b_{d}!}
      \prod_{\ell=0}^{k}[(\ell+1)(\ell+2)\cdots(\ell+n-k)]^{\delta_{k-\ell}},
    \end{equation}
    where \( h=(h_{1},\cdots,h_{d})=(n+k-s_{2},s_{2}-s_{3},\cdots,s_{d-1}-s_{d},s_{d}) \).
    \end{enumerate}
\end{cor}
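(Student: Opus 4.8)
The plan is to obtain both assertions by specialising the Proposition of \autoref{section:nleqm} (the case $m\ge n$) to $m=n$, and then feeding the resulting square matrix into the determinant computation carried out immediately above, which extends \autoref{prop:proctor} to arbitrary $h_{1}\ge\cdots\ge h_{d}\ge 0$.

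For assertion (1), I would first apply that Proposition with $m=n$, which yields that every weight of $H^{d}(n,0,\cdots,0,-n-d)$ is $\le(m-n-1,n,0,\cdots,0)=(-1,n,0,\cdots,0)$. Because $H^{d}(\mu)$ is a rational $G$-module of finite type over $\mathbb{Z}$, its set of weights is finite and $W$-stable (the Weyl group of $\SL_{d+1}$ is realised by elements of $G(\mathbb{Z})$, which identify the $\nu$- and $w\nu$-weight spaces), so every weight that is maximal for the dominance order is dominant: were $\langle\lambda,\alpha_{i}^{\vee}\rangle<0$ for some $i$, then $s_{i}\lambda=\lambda-\langle\lambda,\alpha_{i}^{\vee}\rangle\alpha_{i}>\lambda$ would be another weight, contradicting maximality. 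It therefore suffices to bound the dominant weights, and for these I would establish the equivalence announced in the text: a dominant $\lambda$ satisfies $\lambda\le(-1,n,0,\cdots,0)$ if and only if $\lambda\le(0,n-2,1,0,\cdots,0)$. The crucial identity is $(-1,n,0,\cdots,0)=(0,n-2,1,0,\cdots,0)+\alpha_{2}$, coming from $\alpha_{2}=-\omega_{1}+2\omega_{2}-\omega_{3}$; one implication is then immediate. For the converse I would write $(-1,n,0,\cdots,0)-\lambda=\sum_{i}c_{i}\alpha_{i}$ with all $c_{i}\ge 0$ and pair with $\alpha_{1}^{\vee}$, getting $0\le\langle\lambda,\alpha_{1}^{\vee}\rangle=-1-2c_{1}+c_{2}$, whence $c_{2}\ge 1$ and $\lambda\le(-1,n,0,\cdots,0)-\alpha_{2}=(0,n-2,1,0,\cdots,0)$.

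For assertion (2), I would invoke the same Proposition at $m=n$: for every $(k,s_{2},\cdots,s_{d})$ making $\nu=(-1,n,0,\cdots,0)-k\alpha_{1}-s_{2}\alpha_{2}-\cdots-s_{d}\alpha_{d}$ dominant, it presents $H^{d}(\mu)_{\nu}$ as the cokernel of the integer matrix \eqref{eq:7e42ab2aaf16c45f}, with entry $\binom{n-k}{b_{1}-b_{1}',\ldots,b_{d}-b_{d}'}$ at $(b,b')$, $b\in C$, $b'\in D$. Putting $s_{1}=n+k$, $h_{i}=s_{i}-s_{i+1}$ for $1\le i\le d-1$ and $h_{d}=s_{d}$, the dictionaries $(b_{1},\cdots,b_{d})\in C(d,h,n)\Leftrightarrow(0,b_{1},\cdots,b_{d})\in C$ and $(b_{1},\cdots,b_{d})\in C(d,h,k)\Leftrightarrow(n-k-1,b_{1},\cdots,b_{d})\in D$ identify this matrix with the one in \eqref{eq:9a0eb8726687ebf7}, and the bijection $\Phi\colon C\to D$ shows it is square. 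Its determinant has just been evaluated — via \autoref{prop:proctor} after the hypothesis $h_{i}\ge 1$ has been removed — as $(-1)^{S_{k'}}$ times the product displayed in \eqref{eq:a91238c5d6d67c77}; taking absolute values kills the sign and gives precisely \eqref{eq:a91238c5d6d67c77}, with $h_{1}=s_{1}-s_{2}=n+k-s_{2}$ as claimed.

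Thus the corollary is essentially an assembly step: the substantive content — the triangularisation of $f$ through the change of basis and the ensuing Lemma, together with Proctor's determinant evaluation — has already been carried out, so there is no genuine obstacle inside the corollary itself. The only new verification is the dominant-weight equivalence in (1), which reduces to the one-line pairing argument above; I note that this argument is uniform in $d$, the sole degeneracy being the purely notational one that for $d=2$ the slot carrying the entry $1$ in $(0,n-2,1,0,\cdots,0)$ is absent and the identity reads $(-1,n)=(0,n-2)+\alpha_{2}$ with $\alpha_{2}=-\omega_{1}+2\omega_{2}$.
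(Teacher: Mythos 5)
Your proposal is correct and follows essentially the same route as the paper: specialise the Proposition of Section \ref{section:nleqm} to \( m=n \), identify the resulting matrix with the square matrix of Proposition \ref{prop:proctor} (with the hypothesis \( h_{i}\geq 1 \) removed as in the text), and convert the bound \( \leq(-1,n,0,\cdots,0) \) into \( \leq(0,n-2,1,0,\cdots,0) \) via dominance of maximal weights. The only difference is that you spell out the two facts the paper merely asserts — that maximal weights are dominant and that for dominant \( \lambda \) the two bounds are equivalent — and your pairing argument with \( \alpha_{1}^{\vee} \) does this correctly.
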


\begin{cor}\label{cor:nsmall}
  Let \( p \) be a prime number such that \( p>n \). Then \( H^{d}(n,0,\cdots,0,-n-d)\) is without \( p \)-torsion.
\end{cor}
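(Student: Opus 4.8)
The plan is to reduce the absence of $p$-torsion to a divisibility property of the determinants computed in \autoref{cor:onthewall}. Writing $\mu=(n,0,\cdots,0,-n-d)$, recall that $H^{d}(\mu)$ is a finitely generated torsion $\mathbb{Z}$-module (Section \ref{section:nleqm}), hence a finite abelian group, and that it decomposes as the direct sum $\bigoplus_{\nu}H^{d}(\mu)_{\nu}$ of its weight spaces. Since the Weyl group $W$ is realized inside $\SL_{d+1}(\mathbb{Z})$ by signed permutation matrices, each $\dot{w}\in G(\mathbb{Z})$ lifting $w\in W$ induces an isomorphism of abelian groups $H^{d}(\mu)_{\nu}\cong H^{d}(\mu)_{w\nu}$; every weight is thus $W$-conjugate to a dominant one, so it suffices to show that $H^{d}(\mu)_{\nu}$ has no $p$-torsion for each dominant $\nu$ of the form appearing in \autoref{cor:onthewall}.

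Fix such a $\nu$, with associated data $(k,s_{2},\cdots,s_{d})$ and $h=(h_{1},\cdots,h_{d})$. By part~(2) of \autoref{cor:onthewall}, $H^{d}(\mu)_{\nu}$ is the cokernel of a \emph{square} integer matrix $M_{\nu}$; moreover the right-hand side of \eqref{eq:a91238c5d6d67c77} is a product of positive integers, so $\det M_{\nu}\neq0$. By the Smith normal form, $\coker M_{\nu}\cong\bigoplus_{i}\mathbb{Z}/d_{i}\mathbb{Z}$ with $\prod_{i}d_{i}=|\det M_{\nu}|$, so $\coker M_{\nu}$ has no $p$-torsion if and only if $p\nmid|\det M_{\nu}|$. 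I would therefore compute the $p$-adic valuation $v_{p}(|\det M_{\nu}|)$ directly from \eqref{eq:a91238c5d6d67c77}.

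The crucial observation is that every integer occurring in \eqref{eq:a91238c5d6d67c77} is at most $n$, hence coprime to $p$ because $p>n$. Indeed, in the numerator each factorial argument satisfies $b'_{i}\leq b'_{1}+\cdots+b'_{d}=k\leq n-2$, the last inequality being the bound $k\leq n-2$ from the proof of \autoref{prop:proctor}; in the denominator each $b_{i}\leq b_{1}+\cdots+b_{d}=n$; and in the final product every factor of $(\ell+1)(\ell+2)\cdots(\ell+n-k)$ lies between $1$ and $\ell+n-k\leq k+(n-k)=n$. Hence each factorial and each consecutive-integer product appearing in \eqref{eq:a91238c5d6d67c77} has $p$-adic valuation $0$. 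Writing $v_{p}(|\det M_{\nu}|)$ as the corresponding $\mathbb{Z}$-linear combination of these valuations, with the integers $\delta_{k-\ell}$ as (possibly negative) coefficients, we obtain $v_{p}(|\det M_{\nu}|)=0$, i.e. $p\nmid|\det M_{\nu}|$, which completes the argument.

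I expect the only delicate points to be bookkeeping rather than conceptual: first, checking that \emph{all} the integers entering \eqref{eq:a91238c5d6d67c77} are bounded by $n$---in particular that the largest factor $\ell+n-k$ of the consecutive-integer products is attained at $\ell=k$ and equals exactly $n$, and that the numerator factorials involve only integers $\leq k\leq n-2$; and second, noting that the possibly negative exponents $\delta_{k-\ell}$ (which make \eqref{eq:a91238c5d6d67c77} a priori a rational, though in fact integral, expression) are harmless for the valuation computation, since $v_{p}$ is additive and each base contributes $0$.
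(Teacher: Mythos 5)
Your proof is correct and follows essentially the same route as the paper: reduce to the dominant weight spaces described in \autoref{cor:onthewall}, note that each is the cokernel of a square integer matrix with nonzero determinant, and observe that since every factorial argument is $\leq n<p$ and every factor $\ell+1,\dots,\ell+n-k$ is $\leq n<p$, the determinant \eqref{eq:a91238c5d6d67c77} has $p$-adic valuation zero. The only difference is that you spell out two steps the paper leaves implicit (the $W$-equivariance reduction to dominant weights and the Smith normal form argument), which is harmless.
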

\begin{proof}
  If \( n<p \), then for all \( b=(b_{1},\cdots,b_{d})\in C(d,h,k)\cup C(d,h,n) \) and \(
  i\in\{1,\cdots,d\} \), we have \( b_{i}\leq n<p \). For all
  \( 0\leq \ell\leq k \), we have \( \ell+n-k\leq n<p \). Hence 
  the determinant \eqref{eq:a91238c5d6d67c77} is
  non-zero modulo \( p \), and its cokernel has no \( p \)-torsion. 
\end{proof}
\begin{cor}\label{cor:Epp}
  Let \( K \) be an arbitrary field of characteristic \( p>0 \).
  \begin{enumerate}
  \item Then the dominant weights of
    \( H^{d}_{K}(p,0,\cdots,0,-p-d) \) are exactly those
    \( \leq \lambda_{0}=(0,p-2,1,0,\cdots,0) \), each of multiplicity
    \( 1 \).
  \item As a consequence, one has
    \( H^{d}_{K}(p,0,\cdots,0,-p-d)\cong L_{K}(\lambda_{0}) \), where
    \( L_{K}(\lambda_{0}) \) is the simple module of highest weight
    \( \lambda_{0} \).
  \end{enumerate}

\end{cor}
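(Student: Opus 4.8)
The plan is to derive both assertions from \autoref{cor:onthewall} by passing to weight spaces. Since \( H^{d+1}(\mu)=0 \), the universal coefficient sequence \eqref{eq:4550b9f5b03b97a6} gives an isomorphism \( H^{d}_{K}(\mu)\cong H^{d}(\mu)\otimes K \) compatible with the \( T \)-weight decomposition, and as \( \otimes K \) is right exact it commutes with forming cokernels. Hence, for each dominant \( \nu\leq\lambda_{0} \), the \( \nu \)-weight space of \( H^{d}_{K}(p,0,\cdots,0,-p-d) \) is \( \coker(M_{\nu})\otimes K \), where \( M_{\nu} \) is the square integer matrix presenting \( H^{d}(\mu)_{\nu} \) via \eqref{eq:8d3f39d1d8e8a856} and \autoref{cor:onthewall}. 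Because \( \coker(M_{\nu}) \) is a finite abelian group whose elementary divisors \( d_{1}\mid\cdots\mid d_{N} \) satisfy \( \sum_{i}v_{p}(d_{i})=v_{p}(\det M_{\nu}) \), the dimension of \( \coker(M_{\nu})\otimes K \) equals the number of \( d_{i} \) divisible by \( p \); when \( v_{p}(\det M_{\nu})=1 \) this number is exactly one. So part (1) reduces to the single claim that \( v_{p}(\det M_{\nu})=1 \) for every dominant \( \nu\leq\lambda_{0} \), which simultaneously forces each such weight to occur and to have multiplicity one.

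To prove \( v_{p}(\det M_{\nu})=1 \) I read off the three factors of \eqref{eq:a91238c5d6d67c77} with \( n=p \). Dominance of \( \nu \) gives the nonnegativity of its \( \omega_{1} \)-coordinate, i.e. \( 0\leq 2n-1-s_{1}-h_{1} \), whence \( h_{1}\leq p-1-k \); moreover \( h_{1}\geq 1 \) (otherwise all \( h_{i}=0 \), impossible since \( \sum_{i}h_{i}=s_{1}=p+k>0 \)), so \( p-1\geq h_{1}\geq\cdots\geq h_{d}\geq 0 \) and \( 0\leq k\leq p-2 \). Consequently each coordinate of an element of \( C(d,h,p) \) is at most \( h_{1}\leq p-1 \) and each coordinate of an element of \( C(d,h,k) \) is at most \( k\leq p-2 \), so all the factorials in the numerator and denominator of \eqref{eq:a91238c5d6d67c77} are prime to \( p \) and contribute valuation \( 0 \). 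In the remaining product \( \prod_{\ell=0}^{k}[(\ell+1)\cdots(\ell+p-k)]^{\delta_{k-\ell}} \), the interval \( [\ell+1,\ell+p-k] \) lies in \( [1,p-1] \) for \( \ell<k \) and equals \( [k+1,p] \) for \( \ell=k \); since \( k\leq p-2 \) the latter contains the multiple \( p \) exactly once, contributing valuation \( 1 \), raised to the exponent \( \delta_{0}=|C(d,h,0)|=1 \). Thus \( v_{p}(\det M_{\nu})=1 \), proving (1). (At the top weight \( \lambda_{0} \) one has \( k=0 \), the sets \( C \) and \( D \) are singletons and \( M_{\nu}=(p) \), in agreement.)

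For part (2), write \( M=H^{d}_{K}(p,0,\cdots,0,-p-d) \). By (1), \( \lambda_{0} \) is the highest weight of \( M \) and \( \dim M_{\lambda_{0}}=1 \), so \( L_{K}(\lambda_{0}) \) occurs with composition multiplicity exactly one. Were there a further composition factor \( L_{K}(\nu') \) with \( \nu'<\lambda_{0} \) dominant, then — since every dominant weight of \( M \) has multiplicity one — \( \nu' \) could not also be a weight of \( L_{K}(\lambda_{0}) \); thus (2) amounts to showing that \( L_{K}(\lambda_{0}) \) already realises every dominant weight \( \leq\lambda_{0} \). I would establish this structurally rather than by computing \( \ch L_{K}(\lambda_{0}) \). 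First I would show that \( M \), presented as \( \coker(f_{K}) \), is generated by its one-dimensional highest weight space — e.g. by checking that the highest monomial generates the cokernel — so that \( M \) has simple head \( L_{K}(\lambda_{0}) \). Then I would show \( M \) has simple socle \( L_{K}(\lambda_{0}) \), equivalently that its space of maximal vectors is one-dimensional; with head and socle both \( L_{K}(\lambda_{0}) \) and \( [M:L_{K}(\lambda_{0})]=1 \), any nonzero radical would place a second copy of \( L_{K}(\lambda_{0}) \) in the socle, a contradiction, giving \( M=L_{K}(\lambda_{0}) \). The main obstacle is precisely the socle (equivalently self-duality, equivalently the weight support of \( L_{K}(\lambda_{0}) \)): multiplicity-freeness alone does not force simplicity, as already for \( \SL_{2} \) in characteristic \( p \) the simple module of highest weight \( p \) has only the weights \( \pm p \) and omits \( p-2 \). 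A promising tool here is the linkage principle, which restricts the possible \( \nu' \) to the affine Weyl orbit of \( \lambda_{0} \); one would then verify that no dominant weight of \( M \) other than \( \lambda_{0} \) lies in that class.
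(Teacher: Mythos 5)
Your part (1) is correct and is essentially the paper's own argument: from dominance of \( \nu \) you extract the same key inequality (the paper writes it as \( s_{2}\geq 2k+1 \), you as \( h_{1}\leq p-1-k \)), use it to bound every coordinate of the tuples in \( C(d,h,p) \) and \( C(d,h,k) \) by \( p-1 \), conclude that the factorial quotient in \eqref{eq:a91238c5d6d67c77} is prime to \( p \), and locate the unique factor \( p \) in the term \( \ell=k \), with exponent \( \delta_{0}=1 \). Your elementary-divisor bookkeeping merely makes explicit the step ``\( v_{p}(\det M_{\nu})=1 \) implies the weight space over \( K \) is one-dimensional'' that the paper leaves implicit, and the ``exactly'' direction is covered by part (1) of \autoref{cor:onthewall} together with \( H^{d}_{K}(\mu)\cong H^{d}(\mu)\otimes K \).

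Part (2), however, has a genuine gap, which you partly acknowledge. The crux is to show that \( L_{K}(\lambda_{0}) \) already realises every dominant weight \( \leq\lambda_{0} \); the paper closes this by quoting Suprunenko's theorem \cite{Sup83}, which asserts precisely that in type \( A \) the dominant weights of \( L_{K}(\lambda_{0}) \) are all dominant weights \( \leq\lambda_{0} \). Given part (1), the composition factor \( L_{K}(\lambda_{0}) \) then exhausts the character of \( H^{d}_{K}(\mu) \), and the isomorphism follows. Your proposed substitute, the linkage principle, provably cannot close the gap once \( d\geq 3 \): take \( \beta=\alpha_{2}+\alpha_{3} \); then \( \langle\lambda_{0}+\rho,\beta^{\vee}\rangle=(p-1)+2=p+1 \), so \( s_{\beta,p}\cdot\lambda_{0}=\lambda_{0}-\beta=\omega_{1}+(p-3)\omega_{2}+\omega_{4} \) (read \( \omega_{1}+(p-3)\omega_{2} \) when \( d=3 \)). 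For \( p\geq 3 \) this is a dominant weight \( <\lambda_{0} \), hence by your part (1) a weight of \( M=H^{d}_{K}(\mu) \), and it lies in the same linkage class as \( \lambda_{0} \). So linkage does not exclude a second composition factor with this highest weight, and the verification you hope for (``no dominant weight of \( M \) other than \( \lambda_{0} \) is linked to \( \lambda_{0} \)'') is false in general — it happens to hold for \( d=2 \), but not beyond. Similarly, your head-and-socle scheme is sound module theory, but both of its inputs — that \( \coker(f_{K}) \) is generated by its highest weight line, and above all that its socle is simple — are left unproven, and the socle statement is essentially equivalent to the missing weight-support fact. Without Suprunenko's theorem, or an equivalent computation of the weights of \( L_{K}(\lambda_{0}) \), part (2) remains open in your proposal.
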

\begin{proof}
  Denote by \( \mu \) the weight \( (p,0,\cdots,0,-p-d) \).  Let \( k,s_{2},\cdots,s_{d}\in\mathbb{N} \)
  such that \(
  \nu=(-1,p,0,\cdots,0)-k\alpha_{1}-s_{2}\alpha_{2}-\cdots-s_{d}\alpha_{d}=(-1-2k+s_{2},p+k+s_{3}-2s_{2},\cdots)
  \) is dominant. Then \(
  s_{2}\geq 2k+1\geq 1 \). Hence we have \( \nu\leq
  (-1,p,0,\cdots,0)-\alpha_{2}=\lambda_{0} \). Thus, by \autoref{cor:onthewall}, every
  dominant weight of \( H^{d}_{K}(\mu)\cong H^{d}(\mu)\otimes K \) is \( \leq \lambda_{0} \). Moreover, for every such weight \(
  \nu \), let us adopt the notations in \autoref{cor:onthewall} with
  \( n=p \). Since \( s_{2}\geq 2k+1 \), we have \(
  h_{1}=n+k-s_{2}\leq n+k-(2k+1)=n-k-1\leq p-1 \), hence \( p-1\geq
  h_{1}\geq h_{2}\geq \cdots\geq h_{d} \). Therefore, for every \(
  \ell\in\mathbb{N} \) and every \( b=(b_{1},\cdots,b_{d})\in
  C(d,h,\ell) \), we have \( b_{i}\leq p-1 \) for all \( i \). This
  implies that  neither the numerator nor the
  denominator on the left part of \eqref{eq:a91238c5d6d67c77} involves
  a factor \( p \). In the
  right part of \eqref{eq:a91238c5d6d67c77}, every factor is \( <n=p \)
  except for the term with \( \ell=k \), and one has \( \delta_{0}=1
  \). Hence the \( p \)-adic valuation
  of \eqref{eq:a91238c5d6d67c77} is exactly \( 1 \), which implies
  that the weight \( \nu \) is of multiplicity \( 1 \) in \(
  H^{d}_{K}(\mu) \).

  \footnote{The author is grateful to one of the referees for this
    proof of assertion (2) and pointing out the work of Seitz
    mentioned in the following remark.}On the other hand, by \cite{Sup83},  the set of
weights of the simple module
\( L_{K}(\lambda_{0})\) consists of all dominant weights \( \leq
\lambda_{0} \). Since \( L_{k}(\lambda_{0}) \) is a simple factor of
\( H^{d}_{K}(p,0,\cdots,0,-p-d) \) whose weights are all of
multiplicity \( 1 \) with \(
\lambda_{0} \) as the highest weight, we conclude that \( 
H^{d}_{K}(p,0,\cdots,0,-p-d) \cong L_{K}(\lambda_{0}) \).
\end{proof}

\begin{rmk}
 The corollary above shows that $L_{K}(\lambda_0)$ has one-dimensional weight spaces. 
Note that Seitz has shown (\cite{Sei87}, Prop.~6.1) that if a simple  $\SL_{d+1}(K)$-module $L_{K}(\mu)$ has 
one-dimensional weight spaces, then either $\mu$ is a fundamental weight $\omega_i$ or a multiple of $\omega_1$ or $\omega_d$, 
or $\mu = a \omega_i + (p-1-a) \omega_{i+1}$ for some $i \in \{1,\dots, d-1\}$ and 
$a\in \{0,\dots, p-1\}$, and our result shows that indeed $L_{K}((p-2)\omega_2 + \omega_3)$ has one-dimensional weight spaces. 
\end{rmk}
\begin{cor}\footnote{The author thanks one of the referees for
    suggesting this result.}
   Let \( K \) be an arbitrary field of characteristic \( p>0 \).
  Then the dominant weights of \( L_{K}(p-2,1,0,\cdots,0) \) are
  exactly those \( \leq (p-2,1,0,\cdots,0)
  \), each of multiplicity \( 1 \).
\end{cor}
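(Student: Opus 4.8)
The plan is to deduce this statement from \autoref{cor:Epp} applied to a group of one higher rank, transporting the information by restriction to a Levi subgroup. Write $\lambda=(p-2,1,0,\cdots,0)=(p-2)\omega_1+\omega_2$ for $\SL_{d+1}$, and observe that $\lambda$ is of the admissible form $a\omega_i+(p-1-a)\omega_{i+1}$ with $i=1$ and $a=p-2$, so that $a+(p-1-a)=p-1$.

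First I would apply \autoref{cor:Epp}(1) not to $\SL_{d+1}$ but to $\SL_{d+2}$; all the results of the paper are uniform in the rank, so this is legitimate, and the relevant weight has three nonzero fundamental coordinates, which is harmless since $d+1\geq 3$. This yields that the simple $\SL_{d+2}$-module $M:=L_K((p-2)\omega_2+\omega_3)$ has as its dominant weights exactly those $\leq(p-2)\omega_2+\omega_3$, each of multiplicity one. Since the weight set of a module is stable under the Weyl group and every weight is conjugate to a dominant one, it follows that every weight space of $M$ is at most one-dimensional; that is, $M$ is multiplicity free.

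Next I would restrict $M$ to the standard Levi subgroup $L$ of $\SL_{d+2}$ attached to the simple roots $\alpha_2,\cdots,\alpha_{d+1}$, whose derived group is isomorphic to $\SL_{d+1}$. By Smith's theorem on the restriction of irreducibles to Levi subgroups, the space of invariants of $M$ under the unipotent radical $R_u(P)$ of the parabolic $P=L\,R_u(P)$ is isomorphic, as an $L$-module, to the simple $L$-module of highest weight $(p-2)\omega_2+\omega_3$. Computing $\langle(p-2)\omega_2+\omega_3,\alpha_i^{\vee}\rangle$ for $i=2,\cdots,d+1$ and relabelling $\beta_j=\alpha_{j+1}$ shows that this highest weight restricts to $(p-2)\varpi_1+\varpi_2$ in the fundamental weights $\varpi_1,\cdots,\varpi_d$ of the type $A_d$ system $\{\alpha_2,\cdots,\alpha_{d+1}\}$; hence, as a module for the derived group $\SL_{d+1}$, one has $M^{R_u(P)}\cong L_K(\lambda)$. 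Being a $T$-submodule of the multiplicity-free module $M$, the space $M^{R_u(P)}$ is itself multiplicity free, and therefore every weight of $L_K(\lambda)$ occurs with multiplicity at most one.

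It remains to show that every dominant weight $\leq\lambda$ does occur. For this I would invoke the same external input as in the proof of \autoref{cor:Epp}: since $\lambda=(p-2)\omega_1+\omega_2$ is of the form $a\omega_i+(p-1-a)\omega_{i+1}$, the result of Suprunenko \cite{Sup83} gives that the dominant weights of $L_K(\lambda)$ are exactly those $\leq\lambda$. Combining this with the multiplicity-at-most-one bound obtained above establishes that each such weight occurs with multiplicity exactly one, which is the assertion. I expect the main obstacle to be the bookkeeping around Smith's theorem, namely verifying with the paper's conventions (lower-triangular $B$) that the unipotent-radical invariants single out precisely the simple $L$-module of the restricted highest weight, and that the dominance order and weight multiplicities transfer correctly under restriction to $L$; the surjectivity of the weight set, i.e.\ that no dominant weight $\leq\lambda$ is missing, is exactly the point where multiplicity-freeness is insufficient and the input of \cite{Sup83} is genuinely needed.
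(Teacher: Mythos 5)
Your proof is correct and takes essentially the same route as the paper: the paper also applies \autoref{cor:Epp} one rank higher (phrased there as ``replacing \( d \) with \( d+1 \)'') and restricts to the Levi factor of \( P_{1} \), citing \cite{Jan03} II.2.11 b) --- which is exactly Smith's theorem --- to realize the simple module of the restricted highest weight as a \( T \)-submodule of the multiplicity-free module \( L_{K}(\lambda_{0}) \). The final step, invoking \cite{Sup83} to guarantee that every dominant weight \( \leq (p-2,1,0,\cdots,0) \) actually occurs, is likewise identical to the paper's argument.
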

\begin{proof}
  Consider the Levi factor \( G'\cong \GL_{d} \) corresponding to the
  parabolic \( P_{1} \), and let \( L'_{K}(\lambda_{0}) \) be the
  simple \( G' \)-module with highest weight \( \lambda_{0} \). 
  By \cite{Jan03} II 2.11 b), we know that \( L'_{K}(\lambda_{0}) \)
  is a \( T \)-submodule of \( L_{K}(\lambda_{0}) \), hence each weight is of
  multiplicity \( 1 \) by \autoref{cor:Epp}. By replacing \( d \) with
  \( d+1 \), we deduce that all weights of the simple module \( L_{K}(p-2,1,0,\cdots,0)
  \) are of multiplicity \( 1 \). On the other hand, by
  \cite{Sup83}, it contains all dominant weights \( \leq (p-2,1,0,\cdots,0)
  \), which concludes the proof. 
\end{proof}
\begin{cor}  \label{coro:375ffdf8785a7373}
  Let \( K \) be a field of characteristic \( p>0 \) and \( \mu_{n}=(n,0,\cdots,0,-n-d) \). Suppose that \( n=p+r \)
with either
 (i) \(  0\leq r\leq p-2 \) or (ii) \( r=p-1\) and \(d\geq 3 \).

Let  \( \lambda_{r}=r\omega_{1}+(p-r-2)\omega_{2}+(r+1)\omega_{3}=(-1,n,0,\dots,0)-(r+1)\alpha_{2} \)
in case (i) and \( \lambda_{r}=(-1,n,0,\dots,0)-p\alpha_{2}-\alpha_{3}
\) in case (ii).

Then \( H^{d}_{K}(\mu_{n}) \) contains the weight \( \lambda_{r} \), with
  multiplicity \( 1 \).
\end{cor}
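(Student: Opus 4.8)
The plan is to read off the multiplicity of \( \lambda_{r} \) in \( H^{d}_{K}(\mu_{n}) \) directly from \autoref{cor:onthewall}. Since we are on the wall (\( m=n \)) and \( H^{d}_{K}(\mu_{n})\cong H^{d}(\mu_{n})\otimes K \), the multiplicity of \( \lambda_{r} \) equals \( \dim_{K}\big(H^{d}(\mu_{n})_{\lambda_{r}}\otimes K\big) \). By \autoref{cor:onthewall}(2) the weight space \( H^{d}(\mu_{n})_{\lambda_{r}} \) is the cokernel over \( \mathbb{Z} \) of the multinomial matrix indexed by \( C(d,h,n) \) (rows) and \( C(d,h,k) \) (columns). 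So I would first identify the parameters \( (k,s_{2},\dots,s_{d}) \) realizing \( \nu=\lambda_{r} \), then determine the matrix, and finally compute the \( p \)-adic valuation of its entries.

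In both cases the key observation is that \( k=0 \). In case (i), comparing \( \lambda_{r}=(-1,n,0,\dots,0)-(r+1)\alpha_{2} \) with the parametrization of \autoref{cor:onthewall} gives \( k=0 \), \( s_{2}=r+1 \) and \( s_{3}=\dots=s_{d}=0 \), so that \( h=(n+k-s_{2},\,s_{2}-s_{3},\dots,s_{d})=(p-1,\,r+1,\,0,\dots,0) \); here the dominance condition \( h_{1}\geq h_{2} \) is exactly \( r\leq p-2 \). In case (ii), \( \lambda_{r}=(-1,n,0,\dots,0)-p\alpha_{2}-\alpha_{3} \) gives \( k=0 \), \( s_{2}=p \), \( s_{3}=1 \) and \( s_{4}=\dots=s_{d}=0 \), whence \( h=(p-1,\,p-1,\,1,\,0,\dots,0) \), which requires \( d\geq 3 \) so that \( h_{3} \) exists. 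In either case \( \sum_{i}h_{i}=s_{1}=n+k=n \), so the constraints \( \sum_{i}b_{i}=n \) and \( b_{i}\leq h_{i} \) force \( b=h \); thus \( C(d,h,n)=\{h\} \) is a singleton, and \( C(d,h,0)=\{0\} \) is as well. Hence the matrix of \autoref{cor:onthewall}(2) degenerates to the single multinomial coefficient \( \binom{n}{h_{1},\dots,h_{d}} \), and \( H^{d}(\mu_{n})_{\lambda_{r}}\cong \mathbb{Z}\big/\binom{n}{h_{1},\dots,h_{d}}\mathbb{Z} \).

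It then remains to compute the \( p \)-adic valuation of this coefficient and to check it equals \( 1 \). In case (i) it is \( \binom{p+r}{p-1,\,r+1}=\binom{p+r}{p-1} \); since \( 0\leq r\leq p-2 \) one has \( r+1\leq p-1 \), so adding \( p-1 \) and \( r+1 \) in base \( p \) produces exactly one carry, and by Kummer's theorem \( v_{p}=1 \). In case (ii) it is \( \binom{2p-1}{p-1,\,p-1,\,1} \), and Legendre's formula (the multinomial form of Kummer's theorem) gives \( v_{p}=\tfrac{1}{p-1}\big(2(p-1)+1-p\big)=1 \). Finally, since \( v_{p}\big(\binom{n}{h_{1},\dots,h_{d}}\big)=1 \), the coefficient is divisible by \( p \) but not by \( p^{2} \), so \( \big(\mathbb{Z}/\binom{n}{h_{1},\dots,h_{d}}\mathbb{Z}\big)\otimes K\cong K \) is one-dimensional; that is, \( \lambda_{r} \) occurs in \( H^{d}_{K}(\mu_{n}) \) with multiplicity exactly \( 1 \).

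The argument is short once one sees that the matrix collapses to a single entry, a feature coming entirely from \( k=0 \) together with \( \sum_{i}h_{i}=n \). The only point requiring care, and the mild obstacle, is the case bookkeeping: one must verify that \( \lambda_{r} \) is genuinely dominant (so that \autoref{cor:onthewall}(2) applies) and that the hypotheses \( 0\leq r\leq p-2 \) respectively \( r=p-1,\ d\geq 3 \) are precisely what enforce \( h_{1}\geq h_{2}\geq\dots\geq h_{d}\geq 0 \); beyond this no genuine difficulty remains.
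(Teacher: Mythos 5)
Your proof is correct, and it rests on the same reduction as the paper's — you identify the same parameters \((k,s_{2},\dots,s_{d})\), namely \((0,r+1,0,\dots,0)\) in case (i) and \((0,p,1,0,\dots,0)\) in case (ii), hence the same \(h=(p-1,r+1,0,\dots,0)\) resp. \((p-1,p-1,1,0,\dots,0)\) — but the computational core is genuinely different. The paper never observes that the matrix collapses: it works with the determinant formula \eqref{eq:a91238c5d6d67c77} coming from \autoref{prop:proctor}, checks that \(h_{1}=p-1\geq h_{2}\geq\cdots\geq h_{d}\) forces every factorial in the left-hand ratio to be prime to \(p\), and uses \(k=0\), \(\delta_{0}=1\) to identify the right-hand product with \(n!\), of valuation \(1\). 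You instead note that \(k=0\) together with \(\sum_{i}h_{i}=n\) makes both index sets singletons, so the matrix is the single entry \(\binom{n}{h_{1},\dots,h_{d}}\), whose \(p\)-adic valuation you compute by Kummer/Legendre. Your route buys two things: it bypasses Proctor's evaluation entirely (only the cokernel description of the weight space from the on-the-wall section is needed), and it identifies the weight space integrally as the cyclic group \(\mathbb{Z}\big/\binom{n}{h_{1},\dots,h_{d}}\mathbb{Z}\), which is finer information than the determinant of an unspecified square matrix; the paper's route has the advantage of uniformity with the neighbouring results (\autoref{cor:nsmall}, \autoref{cor:Epp}), where the matrix is genuinely larger and the product formula is indispensable. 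Two minor points of care: dominance of \(\nu\) is not only the chain \(h_{1}\geq\cdots\geq h_{d}\geq 0\) but also includes the first coordinate \(2n-1-2s_{1}+s_{2}\geq 0\), which you should check explicitly (it equals \(r\), resp. \(p-1\), so it holds); and in the last step divisibility of the coefficient by \(p\) alone already gives \(\mathbb{Z}\big/\binom{n}{h_{1},\dots,h_{d}}\mathbb{Z}\otimes K\cong K\) — the ``not divisible by \(p^{2}\)'' clause is irrelevant for a cyclic group, though your valuation computation establishes it anyway.
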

\begin{proof}
 Let us adopt the notations in \autoref{cor:onthewall}. In case (i), the weight \(
 \lambda_{r} \) corresponds to 
 \begin{displaymath}
 (k,s_{2},\cdots,s_{d})=(0,r+1,0,\cdots,0).
\end{displaymath}
In case (ii), it
corresponds to
\begin{displaymath}
 (k,s_{2},\cdots,s_{d})=(0,r+1,1,\cdots,0).
\end{displaymath}
In both cases, we have \(
 h_{1}=n+k-s_{2}=p+r-r-1=p-1 \) and \(
 h_{1}\geq h_{2}\geq \dots\geq h_{d}\). Therefore, for every \(
  \ell\in\mathbb{N} \) and every \( b=(b_{1},\cdots,b_{d})\in
  C(d,h,\ell) \), we have \( b_{i}\leq p-1 \) for all \( i \). This
  implies that in both cases,  neither the numerator nor the
  denominator on the left part of \eqref{eq:a91238c5d6d67c77} involves
  a factor \( p \). Moreover, since in both cases, we have \( k=0 \) and \( \delta_{0}=1 \), the right part of
  \eqref{eq:a91238c5d6d67c77} equals to \( n! \), whose \( p \)-adic valuation is \( 1 \). Hence the
  \( p \)-adic valuation of \eqref{eq:a91238c5d6d67c77} is exactly \( 1 \), which implies that \( H^{d}_{K}(\mu_{n}) \) contains the
  weight \( \lambda_{r} \) with multiplicity \( 1 \).
\end{proof}
\begin{rmk}
  \begin{enumerate}
  \item
    In a companion paper \cite{LP19} with P. Polo, we extend
    \autoref{cor:nsmall} to the case \( p>n \) and \( m \) arbitrary
    and we improve on \autoref{cor:Epp} and
    \autoref{coro:375ffdf8785a7373} by showing
    that\( H^{d}_{K}(p+r,0,...,0,-p-r-d) \) is the simple module
    \( L(\lambda_r) \).
   \item By \autoref{cor:Epp}, every weight of
     \(H^{d}_{K}(\mu_{p})  \) has multiplicity \( 1 \). This
     is no longer true for \( H^{d}_{K}(\mu_{p+r}) \) if \( r\geq 1 \). For
     example, set \( d=3 \), \( p=3 \) and \( r=1 \). Then  \( H^{d}_{K}(\mu_{p+r})=H^{3}_{K}(4,0,-7) \) has
     three dominant weights: \( (1,0,2),(1,1,0) \) and \( (0,0,1)
     \). The first two are both of multiplicity \( 1 \), but the last
     one appears with multiplicity \( 3 \).
  \end{enumerate}

\end{rmk}

In general, the number \( \delta_{k-\ell} \) in
\eqref{eq:a91238c5d6d67c77} is not easy to calculate. But if we
suppose that \( h_{1}\geq k \), 
we have the following proposition:
\begin{proposition}\label{prop:m1geqk}
  If \( h_{1}\geq k \), then for all \( \ell\in\{0,\cdots,k\} \), we have
  \begin{equation}
    \label{eq:84c1f2ee55d0c7bf}
    \delta_{k-\ell}=|\{(b_{1},\cdots,b_{d})\in C(d,h,k)\mid b_{1}=\ell \}|.
  \end{equation}

  Therefore, we have
  \begin{equation}
    \label{eq:b856a50c1d2a2abe}
   \det
      \Bigg(\binom{n-k}{b_1-b_1',b_2-b_2',\ldots,b_d-b_d'}\Bigg)_{\substack{b\in
          C\\b'\in D}}
      =(-1)^{S_{k'}}\frac{\prod_{b\in
          C(d,h,n)}\binom{n}{b_{1},b_{2},\cdots,b_{d}}}{\prod_{b'\in C(d,h,k)}\binom{n}{b'_{1}+n-k,b'_{2},\cdots,b'_{d}}}.
    \end{equation}

  Moreover, if \( d=2 \) or \( 3 \), we are always in this case.
\end{proposition}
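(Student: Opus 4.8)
The plan is to establish the two displayed identities \eqref{eq:84c1f2ee55d0c7bf} and \eqref{eq:b856a50c1d2a2abe} in turn, and then to verify the final assertion about $d=2,3$ by an elementary inequality coming from the dominance of $\nu$.

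First I would prove \eqref{eq:84c1f2ee55d0c7bf}. The key device is the shift map $\psi\colon C(d,h,m-1)\to C(d,h,m)$, $(b_1,b_2,\ldots,b_d)\mapsto(b_1+1,b_2,\ldots,b_d)$, considered for $1\leq m\leq k$. Since any element of $C(d,h,m-1)$ has $b_1\leq m-1$ and $m\leq k\leq h_1$, one gets $b_1+1\leq m\leq h_1$, so $\psi$ is well defined; it is visibly injective, and its image is exactly $\{b\in C(d,h,m)\mid b_1\geq 1\}$ (the inverse simply subtracts $1$ from $b_1$). Hence $|C(d,h,m-1)|=|\{b\in C(d,h,m)\mid b_1\geq 1\}|$, so that $\delta_m=|C(d,h,m)|-|C(d,h,m-1)|=|\{b\in C(d,h,m)\mid b_1=0\}|$ (the case $m=0$ being immediate from $\delta_0=|C(d,h,0)|=1$). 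Finally, deleting the first coordinate identifies $\{b\in C(d,h,m)\mid b_1=0\}$ with the set of $(b_2,\ldots,b_d)$ of sum $m$ satisfying $b_i\leq h_i$, and inserting $b_1=k-m$ (legitimate since $k-m\leq k\leq h_1$) identifies the same data with $\{b\in C(d,h,k)\mid b_1=k-m\}$. Putting $m=k-\ell$ gives \eqref{eq:84c1f2ee55d0c7bf}.

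Next I would deduce \eqref{eq:b856a50c1d2a2abe} from Proctor's formula \eqref{eq:9a0eb8726687ebf7}. Writing $(\ell+1)(\ell+2)\cdots(\ell+n-k)=(\ell+n-k)!/\ell!$ and using \eqref{eq:84c1f2ee55d0c7bf} to re-index the product over $\ell$ by the first coordinate $b_1'=\ell$ of the elements of $C(d,h,k)$, the last factor of \eqref{eq:9a0eb8726687ebf7} becomes $\prod_{b'\in C(d,h,k)}(b_1'+n-k)!/b_1'!$. The denominators $b_1'!$ cancel the matching factors in $\prod_{b'}b_1'!\cdots b_d'!$, so \eqref{eq:9a0eb8726687ebf7} reduces to $(-1)^{S_{k'}}\frac{\prod_{b'\in C(d,h,k)}(b_1'+n-k)!\,b_2'!\cdots b_d'!}{\prod_{b\in C(d,h,n)}b_1!\cdots b_d!}$. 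On the other hand, expanding the multinomials on the right of \eqref{eq:b856a50c1d2a2abe}, where every listed multinomial has a vanishing final entry because its parts sum to $n$, produces precisely this expression multiplied by $(n!)^{|C(d,h,n)|-|C(d,h,k)|}$; and that extra power is trivial because $b\mapsto h-b$ is a bijection $C(d,h,n)\xrightarrow{\sim}C(d,h,k)$ (using $\sum_i h_i=n+k$). Comparing the two sides yields \eqref{eq:b856a50c1d2a2abe}.

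It remains to check that $h_1\geq k$ holds automatically when $d=2$ or $3$. Dominance of $\nu$ gives $h_1\leq n-1-k$, exactly as in the proof of \autoref{prop:proctor}, whence $h_2+\cdots+h_d=(n+k)-h_1\geq 2k+1$. Since each $h_i\leq h_1$ for $i\geq 2$, this forces $(d-1)h_1\geq 2k+1$, i.e. $h_1\geq (2k+1)/(d-1)$. For $d=2$ the bound is $2k+1>k$ and for $d=3$ it is $k+\tfrac12>k$, so in both cases $h_1\geq k$, as claimed (and the bound degrades for $d\geq 4$, which is why the hypothesis is genuinely needed there). The main obstacle is the combinatorial identity \eqref{eq:84c1f2ee55d0c7bf}: once the shift map is set up and the hypothesis $h_1\geq k$ is used precisely to keep it inside $C(d,h,m)$, the passage to \eqref{eq:b856a50c1d2a2abe} is bookkeeping with factorials together with the reflection bijection $b\mapsto h-b$.
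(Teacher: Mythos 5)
Your proposal is correct, and for the two displayed identities it follows the paper's own argument in all essentials. Your shift map $\psi$ (adding $1$ to $b_1$) is just the inverse of the paper's bijection (which subtracts $1$ from $b_1$, mapping $\{b\in C(d,h,k-\ell)\mid b_{1}\geq 1\}$ onto $C(d,h,k-\ell-1)$), and your deduction of \eqref{eq:b856a50c1d2a2abe} from \eqref{eq:9a0eb8726687ebf7} — writing $(\ell+1)\cdots(\ell+n-k)$ as $(\ell+n-k)!/\ell!$, re-indexing the product over $\ell$ by the first coordinates of $C(d,h,k)$ via \eqref{eq:84c1f2ee55d0c7bf}, and then repackaging factorials as multinomial coefficients — is exactly the paper's computation; your explicit remark that the stray power of $n!$ is trivial because $b\mapsto h-b$ gives $|C(d,h,n)|=|C(d,h,k)|$ fills in a cancellation the paper leaves implicit, which is a small but genuine improvement in completeness. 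Where you genuinely diverge is the final claim for $d=2,3$: the paper argues case by case, using $s_{1}-2s_{2}\geq 0$ to get $h_{1}\geq\tfrac12(n+k)\geq k$ when $d=2$, and combining dominance inequalities to get $s_{2}<n$, hence $h_{1}=n+k-s_{2}>k$, when $d=3$. You instead give one uniform argument: first-coordinate dominance yields $h_{1}\leq n-1-k$ (as in the proof of \autoref{prop:proctor}), hence $h_{2}+\cdots+h_{d}\geq 2k+1$, and since $h_{i}\leq h_{1}$ this forces $(d-1)h_{1}\geq 2k+1$, which gives $h_{1}>k$ precisely when $d-1\leq 2$. This unified route is slightly cleaner than the paper's, and it has the added virtue of displaying exactly why the hypothesis $h_{1}\geq k$ cannot be dropped for $d\geq 4$.
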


\begin{proof}
  Let \( \ell \in \{0,\cdots,k\} \). Set
  \begin{align*}
    I&=C(d,h,k-\ell)=\{(b_{1},\cdots,b_{d})\mid \sum b_{i}=k-\ell, \quad b_{i}\leq
       h_{i}\}\\
    J&=C(d,h,k-\ell-1)=\{(b_{1},\cdots,b_{d})\mid \sum b_{i}=k-\ell-1, \quad b_{i}\leq
       h_{i}\}.
  \end{align*}
  Then by definition, we have \( \delta_{k-\ell}=|I|-|J| \). Since \(
  h_{1}\geq k \), we have
  \begin{align*}
    I&=\{(b_{1},\cdots,b_{d})\mid \sum b_{i}=k-\ell, \quad b_{i}\leq
       h_{i}\text{ for }2\leq i\leq d,\quad b_{1}\leq k\}\\
    J&=\{(b_{1},\cdots,b_{d})\mid \sum b_{i}=k-\ell-1, \quad b_{i}\leq
       h_{i}\text{ for }2\leq i\leq d,\quad b_{1}\leq k\}.
  \end{align*}

  Define \( I'=\{b\in I \mid  b_{1}\geq 1\}\subset I \). We can
  construct a bijection between \( I' \) and \( J \). More precisely,
  define
  \begin{displaymath}
    \phi:I'\rightarrow B,\quad (b_{1},\cdots,b_{d})\mapsto (b_{1}-1,b_{2},\cdots,b_{n}).
  \end{displaymath}
  This is clearly a well-defined injection. On the other hand,
   for all \( (b_{1},\cdots,b_{d})\in J \), we have \(
   b_{1}\leq k-\ell-1\leq k-1\leq h_{1}-1 \), thus \(
   (b_{1}+1,b_{2},\cdots,b_{d})\in I' \) and \( \phi
   (b_{1}+1,b_{2},\cdots,b_{d})=(b_{1},\cdots,b_{d}) \).  Hence \( \phi \) is a
   bijection.

   Now we have
   \begin{align*}
     \delta_{k-\ell}&=|I|-|J|=|I\backslash I'|\\
                    &=|\{b\in I\mid b_{1}=0\}|\\
     &=|\{(0,b_{2},\cdots,b_{d})\mid b_{2}+\cdots+b_{d}=k-\ell, \quad
       b_{i}\leq h_{i}\}|\\
     &=|\{(\ell,b_{2},\cdots,b_{d})\mid \ell+b_{2}+\cdots+b_{d}=k,\quad
       b_{i}\leq h_{i}\}|\\
                    &=|\{b\in C(d,h,k)\mid b_{1}=\ell \}|,
   \end{align*}
   where the last equality is due to the fact that \( \ell\leq k\leq
   h_{1} \). This proves \eqref{eq:84c1f2ee55d0c7bf}.

   With this expression of \( \delta_{k-\ell} \), we have
   \begingroup
   \allowdisplaybreaks
  \begin{align*}
     &\det
      \Bigg(\binom{n-k}{b_1-b_1',b_2-b_2',\ldots,b_d-b_d'}\Bigg)_{\substack{b\in
          C\\b'\in D}} \\
      =&(-1)^{S_{k'}}\frac{\prod_{b'\in
          C(d,h,k)}b'_{1}!b'_{2}!\cdots b'_{d}!}{\prod_{b\in
          C(d,h,n)}b_{1}!b_{2}!\cdots b_{d}!}
     \prod_{\ell=0}^{k}[(\ell+1)(\ell+2)\cdots(\ell+n-k)]^{\delta_{k-\ell}}\\
     =&(-1)^{S_{k'}}\frac{\prod_{b'\in
          C(d,h,k)}b'_{1}!b'_{2}!\cdots b'_{d}!}{\prod_{b\in
          C(d,h,n)}b_{1}!b_{2}!\cdots b_{d}!}
     \prod_{\ell=0}^{k}[(\ell+1)(\ell+2)\cdots(\ell+n-k)]^{\sharp\{b\in
        C(d,h,k)\mid b_{1}=\ell \}}\\
    =&(-1)^{S_{k'}}\frac{\prod_{b'\in
          C(d,h,k)}b'_{1}!b'_{2}!\cdots b'_{d}!}{\prod_{b\in
          C(d,h,n)}b_{1}!b_{2}!\cdots b_{d}!}
     \prod_{\ell=0}^{k}\prod_{\substack{b\in
        C(d,h,k)\\\text{such that }
    b_{1}=\ell}}[(\ell+1)(\ell+2)\cdots(\ell+n-k)]
  \end{align*}
  (here the second product simply means taking \(
  (\ell+1)(\ell+2)\cdots(\ell+n-k) \) to the \( \sharp\{b\in
        C(d,h,k)\mid b_{1}=\ell \}\)-th power)
        \begin{align*}
     =&(-1)^{S_{k'}}\frac{\prod_{b'\in
          C(d,h,k)}b'_{1}!b'_{2}!\cdots b'_{d}!}{\prod_{b\in
          C(d,h,n)}b_{1}!b_{2}!\cdots b_{d}!}
     \prod_{\ell=0}^{k}\prod_{\substack{b\in
        C(d,h,k)\\\text{such that }
    b_{1}=\ell}}[(b_{1}+1)(b_{1}+2)\cdots(b_{1}+n-k)]  \\   
     =&(-1)^{S_{k'}}\frac{\prod_{b'\in
          C(d,h,k)}b'_{1}!b'_{2}!\cdots b'_{d}!}{\prod_{b\in
        C(d,h,n)}b_{1}!b_{2}!\cdots b_{d}!}
        \prod_{b\in
        C(d,h,k)}[(b_{1}+1)(b_{1}+2)\cdots(b_{1}+n-k)]\\
    =&(-1)^{S_{k'}}\frac{\prod_{b'\in
          C(d,h,k)}b'_{1}!b'_{2}!\cdots b'_{d}!}{\prod_{b\in
        C(d,h,n)}b_{1}!b_{2}!\cdots b_{d}!}
        \prod_{b'\in
        C(d,h,k)}[(b'_{1}+1)(b'_{1}+2)\cdots(b'_{1}+n-k)]\\ 
     =&(-1)^{S_{k'}}\frac{\prod_{b'\in
          C(d,h,k)}(b'_{1}+n-k)!b'_{2}!\cdots b'_{d}!}{\prod_{b\in
        C(d,h,n)}b_{1}!b_{2}!\cdots b_{d}!}\\
     =&(-1)^{S_{k'}}\frac{\prod_{b\in
          C(d,h,n)}\binom{n}{b_{1},b_{2},\cdots,b_{d}}}{\prod_{b'\in C(d,h,k)}\binom{n}{b'_{1}+n-k,b'_{2},\cdots,b'_{d}}}.
   \end{align*}
   \endgroup
   This proves \eqref{eq:b856a50c1d2a2abe}

Finally, if \( d=2 \), we have \(
\nu=(2n-1)\omega_{1}-s_{1}\alpha_{1}-s_{2}\alpha_{2}=(2n-1+s_{2}-2s_{1},s_{1}-2s_{2})
\). Since \( \nu \) is dominant, we have \( 0\leq
s_{1}-2s_{2}=2h_{1}-s_{1}=2h_{1}-n-k \), hence \( h_{1}\geq
\frac{1}{2}(n+k)\geq k \) since \( k\leq n \) by the proof of
\autoref{prop:proctor}.

If \( d=3 \), we have \(
\nu=(2n-1)\omega_{1}-s_{1}\alpha_{1}-s_{2}\alpha_{2}-s_{3}\alpha_{3}=(2n-1+s_{2}-2s_{1},s_{1}+s_{3}-2s_{2},
s_{2}-2s_{3}) \). Since \( \nu \) is dominant, we have
\begin{displaymath}
  2s_{2}\leq s_{1}+s_{3}\leq \frac{1}{2}(2n-1+s_{2})+\frac{1}{2}s_{2}=s_{2}+n-\frac{1}{2}.
\end{displaymath}
Hence \( s_{2}<n \), and \( h_{1}=s_{1}-s_{2}=n+k-s_{2}>k \). This  finishes the proof of
\autoref{prop:m1geqk}.  
\end{proof}

\begin{rmk}
  In fact, if \( h_{i}\geq k \) for an \( i\in\{1,\cdots,d\} \) (which
  implies \( h_{1}\geq k \)), then we have
  \begin{equation}
    \label{eq:578de7099ee509cd}
    \det
      \Bigg(\binom{n-k}{b_1-b_1',b_2-b_2',\ldots,b_d-b_d'}\Bigg)_{\substack{b\in
          C\\b'\in D}}
      =(-1)^{S_{k'}}\frac{\prod_{b\in
          C}\binom{n}{b_{1},b_{2},\cdots,b_{d}}}{\prod_{b'\in D}\binom{n}{b'_{1},\cdots,b'_{i-1},b'_{i}+n-k,b'_{i+1}\cdots,b'_{d}}}.
    \end{equation}
The proof is similar to the case \( i=1 \).
\end{rmk}

\section{The case \( G=\SL_{3} \) }\label{section:gl3}
Assume that \( d=2 \), i.e. \( G=\SL_{3} \). Let \( \alpha,\beta \) be
the simple roots, and \( \gamma=\alpha+\beta \).
\paragraph{\ref{section:gl3}.1}
The sets \( C \) and \( D \) are a
lot simpler. In this case, the multinomial coefficients are replaced
by binomial coeffeicients, and we have the following corollaries. 
\begin{cor}\label{prop:sl3above}
Let \( m\geq n>0 \).
  \begin{enumerate}
\item Every weight of \(
    H^{2}(m,-n-2) \) is \( \leq (m-n-1,n) \).
  \item For \( (t,k) \) such that
    \( \nu_{t,k}=(m-n-1,n)-k\alpha-t\beta \) is dominant, the \(
    \nu_{t,k} \)-weight space of \( H^{2}(m,-n-2) \) is isomorphic as
    an abelian group to the cokernel of the matrix
    \begin{equation}
      \label{eq:matrice1}
      D_{m,n,t,k}=\left(
        \begin{array}{cccc}
          \binom{m-k}{t-k}&\binom{m-k}{t-k-1}&\cdots&\binom{m-k}{t-2k+m-n}\\[6pt] 
          \binom{m-k}{t-k+1}&\binom{m-k}{t-k}&\cdots&\binom{m-k}{t-2k+m-n+1}\\[6pt] 
          \vdots&\vdots&\ddots&\vdots\\[6pt] 
          \binom{m-k}{t}&\binom{m-k}{t-1}&\cdots&\binom{m-k}{t-k+m-n}
        \end{array}
      \right)
    \end{equation}
    if \( m-n\leq k\leq t \), and is zero otherwise.
  \end{enumerate}

\end{cor}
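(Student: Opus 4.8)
The plan is to obtain this corollary by specializing to $d=2$ the Proposition proved in Section~\ref{section:nleqm}. Under the identification $\alpha=\alpha_1$ and $\beta=\alpha_2$, the weight $\nu_{t,k}=(m-n-1,n)-k\alpha-t\beta$ is precisely the weight $\nu$ of that Proposition with $s_1=n+k$ and $s_2=t$. Consequently assertion~(1) is nothing but part~(1) of that Proposition, and no further work is needed for it. For assertion~(2) the whole content is to make the index sets $C,D$ and the matrix entries completely explicit in the case $d=2$.

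To do this I would write $b=(0,b_1,b_2)$ with $b_1+b_2=n$ for $b\in C$ and $b'=(m-k-1,b_1',b_2')$ with $b_1'+b_2'=p:=k+n-m$ for $b'\in D$, so that each element is determined by a single coordinate; I would parametrize $C$ by $b_2$ and $D$ by $b_2'$. The multinomial coefficient then collapses to $\binom{m-k}{b_1-b_1'}$, and since $b_1-b_1'=(m-k)-(b_2-b_2')$, the symmetry $\binom{N}{N-x}=\binom{N}{x}$ rewrites the entry as $\binom{m-k}{b_2-b_2'}$. Ordering rows by increasing $b_2$ and columns by increasing $b_2'$ then produces a Toeplitz array whose $(i,j)$ entry depends only on $i-j$, which I expect to coincide with $D_{m,n,t,k}$ once the ranges of $b_2$ and $b_2'$ are pinned down.

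The ranges come from the dominance of $\nu_{t,k}$, which reads $m-n-1-2k+t\ge0$ and $n+k-2t\ge0$. The key observation is that whenever $D$ can be nonempty, i.e. $k\ge m-n$ (so $p\ge0$), the first inequality gives $t\ge 2k-(m-n)+1\ge k+1$, whence $k<t$, and the second inequality then forces $t<n\le m$. Under these inequalities the constraints $b_i\le s_i-s_{i+1}$ and $b_2\le s_2$ cut out exactly $b_2\in\{t-k,\dots,t\}$ for $C$, giving $k+1$ rows, and $b_2'\in\{0,\dots,p\}$ for $D$, giving $p+1=k+n-m+1$ columns. Substituting these into $\binom{m-k}{b_2-b_2'}$ and checking the four corner entries identifies the matrix with the displayed $D_{m,n,t,k}$ (recall that the overall sign $(-1)^{m-k+1}$ appearing in Section~\ref{section:nleqm} is a unit and does not affect the cokernel, and that the cokernel is taken as an abelian group, so transposing $C\leftrightarrow D$ is harmless).

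Finally I would dispose of the ``otherwise'' case. If $k<m-n$ then $p<0$, so $D=\emptyset$ and the weight space, being a quotient of $\mathbb{Z}^{D}=0$, vanishes. And the contrapositive of the implication above shows that a dominant $\nu_{t,k}$ with $k>t$ must satisfy $k<m-n$, reducing it to the previous subcase; hence the weight space is zero whenever $m-n\le k\le t$ fails. The main obstacle is purely combinatorial bookkeeping --- turning the inequalities $b_i\le s_i-s_{i+1}$ into the correct integer intervals for $b_2,b_2'$ and verifying, through the binomial symmetry, that the Toeplitz entries match the four corners of the displayed array; all the representation theory is already packaged in the Proposition of Section~\ref{section:nleqm}.
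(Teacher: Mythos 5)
Your proof is correct and takes essentially the same route as the paper, which presents this corollary as the direct specialization to \(d=2\) of the Proposition of the case \(n\leq m\): your dominance inequalities, the resulting intervals \(b_2\in\{t-k,\dots,t\}\) for \(C\) and \(b_2'\in\{0,\dots,k+n-m\}\) for \(D\), the collapse of the multinomial to \(\binom{m-k}{b_2-b_2'}\), and the treatment of the degenerate cases supply precisely the bookkeeping the paper leaves implicit. One caution: your parenthetical claim that ``transposing \(C\leftrightarrow D\) is harmless'' is false for non-square matrices (\(\coker(M)\) and \(\coker(M^{T})\) have the same torsion but free ranks differing here by \(m-n\)), yet you never actually need it --- your row/column ordering already reproduces \(D_{m,n,t,k}\) on the nose, with the cokernel understood as \(\mathbb{Z}^{D}\) modulo the row span, exactly as in your (correct) handling of the \(D=\emptyset\) case.
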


\begin{cor} Let \( n>m\geq 0 \)
  \begin{enumerate}
\item Every weight of \(
    H^{2}(m,-n-2) \) is \( \leq (n-m-1,m) \).
\item  For \( (t,k) \) such that \(
  \nu_{t,k}=(n-m-1,m)-k\alpha-t\beta \) is dominant, the \(
    \nu_{t,k} \)-weight space of \( H^{2}(m,-n-2) \) is isomorphic as
    an abelian group to the cokernel of the matrix
\begin{equation}
    \label{eq:matrice1souslemur}
    D_{m,n,t,k}=\left(
    \begin{array}{cccc}
     \binom{n-k}{t-k+n-m}&\binom{n-k}{t-k+n-m-1}&\cdots&\binom{n-k}{t-2k+n-m}\\[6pt] 
    \binom{n-k}{t-k+n-m+1}&\binom{n-k}{t-k+n-m}&\cdots&\binom{n-k}{t-2k+n-m+1}\\[6pt] 
    \vdots&\vdots&\ddots&\vdots\\[6pt] 
    \binom{n-k}{t}&\binom{n-k}{t-1}&\cdots&\binom{n-k}{t-k}
    \end{array}
  \right)
\end{equation}
 if \( k\geq n-m \), and is isomorphic to \( \mathbb{Z}^{\min(t,k)-\max(0,t-m)+1}
 \) otherwise.
 \end{enumerate}

\end{cor}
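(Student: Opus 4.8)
The plan is to specialize the Proposition of Section~\ref{section:n>m} to $d=2$ and to read off the two regimes from the combinatorics of the index sets $C$ and $D$. Assertion~(1) is immediate: it is precisely assertion~(1) of that Proposition, which for $d=2$ asserts that every weight of $H^{2}(m,-n-2)$ is $\leq (n-m-1,m)$. For assertion~(2) I would start from the matrix $\big(\binom{n-k}{b_{1}-b_{1}',b_{2}-b_{2}'}\big)_{b\in C,\,b'\in D}$ of that Proposition, where, writing $s_{1}=m+k$ and $h_{1}=s_{1}-s_{2}=m+k-t$, $h_{2}=s_{2}=t$, the sets are
\begin{align*}
C&=\{(0,b_{1},b_{2})\mid b_{1}+b_{2}=n,\ b_{1}\leq m+k-t,\ b_{2}\leq t\},\\
D&=\{(n-k-1,b_{1}',b_{2}')\mid b_{1}'+b_{2}'=k,\ b_{1}'\leq m+k-t,\ b_{2}'\leq t\}.
\end{align*}

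Since $d=2$, the Remark following the Lemma of Section~\ref{section:n>m} gives $(b_{1}-b_{1}')+(b_{2}-b_{2}')=n-k$, so each multinomial coefficient collapses to a single binomial, $\binom{n-k}{b_{1}-b_{1}',b_{2}-b_{2}'}=\binom{n-k}{b_{2}-b_{2}'}$. Reindexing the rows of $C$ by their last coordinate $b_{2}$ and the columns of $D$ by $b_{2}'$, the entry becomes $\binom{n-k}{b_{2}-b_{2}'}$, which depends only on the difference $b_{2}-b_{2}'$; thus the matrix is \emph{banded Toeplitz}, and the whole problem reduces to identifying the ranges of $b_{2}$ and $b_{2}'$. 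The dividing line between the two cases is whether $C$ is empty: summing the box constraints on an element of $C$ forces $n=b_{1}+b_{2}\leq h_{1}+h_{2}=m+k$, so $C=\emptyset$ if and only if $k<n-m$.

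Suppose first $k\geq n-m$, so $C\neq\emptyset$. I would feed the dominance inequalities $n-m-1-2k+t\geq 0$ and $m+k-2t\geq 0$ into the range computation. The first combined with $k\geq n-m$ gives $t\geq 2k-n+m+1\geq k+1$; combining both dominance inequalities yields $3k\leq 2n-m-2$, which together with $k\geq n-m$ forces $n\leq 2m-2$ and hence $k\leq m-2$, so that $t\leq (m+k)/2\leq m-1$. In particular $k<t<m$, whence $b_{2}$ ranges over $[\,n-m-k+t,\ t\,]$ ($m+k-n+1$ values) and $b_{2}'$ over $[\,0,\ k\,]$ ($k+1$ values). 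Evaluating $\binom{n-k}{b_{2}-b_{2}'}$ at the four extreme pairs gives exactly the corner entries $\binom{n-k}{t-k+n-m}$, $\binom{n-k}{t-2k+n-m}$, $\binom{n-k}{t}$, $\binom{n-k}{t-k}$ of $D_{m,n,t,k}$, so the matrix of the Proposition \emph{is} $D_{m,n,t,k}$ and $H^{2}(m,-n-2)_{\nu_{t,k}}$ is its cokernel. Suppose instead $k<n-m$, so $C=\emptyset$; then $f_{\nu}$ imposes no relation among the basis $\{u_{b'}\mid b'\in D\}$ of the quotient, so $H^{2}(m,-n-2)_{\nu_{t,k}}$ is free on $D$. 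Parametrizing $D$ by $b_{2}'$, the constraints $0\leq k-b_{2}'\leq m+k-t$ and $0\leq b_{2}'\leq t$ give $b_{2}'\in[\max(0,t-m),\min(t,k)]$, so the rank is $|D|=\min(t,k)-\max(0,t-m)+1$, as claimed.

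The step I expect to be the real work is the range computation in the case $k\geq n-m$: one must verify, from dominance and $k\geq n-m$ alone, that $k<t$ and $t<m$ hold simultaneously, for it is exactly these two inequalities that force $\max(0,t-m)=0$ and $\min(t,k)=k$, so that the genuine index sets $C,D$ have sizes $m+k-n+1$ and $k+1$ and match the fixed shape of $D_{m,n,t,k}$ with no spurious zero rows or columns inflating the cokernel. (The global sign $(-1)^{n-k+1}$ attached to $M$ in the Proposition is a unit and drops out of the cokernel.) Once these inequalities are in hand the identification of the Toeplitz entries is a routine reindexing, and the two cases assemble into the statement.
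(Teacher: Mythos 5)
Your proposal is correct and takes essentially the same route as the paper: the corollary is stated there as a direct specialization to \( d=2 \) of the proposition for the case \( n>m \), and your reduction of the multinomial coefficients to binomials together with the determination of the ranges of \( b_{2} \) and \( b_{2}' \) from the dominance inequalities (yielding \( k<t<m \) when \( k\geq n-m \), and \( C=\emptyset \) with \( H^{2}(m,-n-2)_{\nu_{t,k}} \) free on \( D \) when \( k<n-m \)) is precisely the verification the paper leaves implicit.
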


\begin{rmk}\label{rmk:sl3mur}
  If \(\mu=(m,-n-2)\) is on the wall, i.e. \(m=n\), then the matrix
  \(D_{m,n,t,k}=D_{n,t,k}\) is square.
 More precisely, we have
   \begin{equation}
    \label{eq:matricemur}
    D_{n,t,k}=\left(
      \begin{array}{cccc}
        \binom{n-k}{t-k}&\binom{n-k}{t-k-1}&\cdots&\binom{n-k}{t-2k}\\[6pt] 
        \binom{n-k}{t-k+1}&\binom{n-k}{t-k}&\cdots&\binom{n-k}{t-2k+1}\\[6pt] 
        \vdots&\vdots&\ddots&\vdots\\[6pt] 
        \binom{n-k}{t}&\binom{n-k}{t-1}&\cdots&\binom{n-k}{t-k}
      \end{array}
    \right).
  \end{equation}

 While we can still apply the result of \cite{Proc90} Cor.1,  this determinant has also been calculated in
 \cite{Krat99} (2.17), which gives:
  \begin{equation}
    \label{eq:detmur}
    d_{n,t,k}=\det(D_{n,t,k})=\prod_{i=1}^{k+1}\prod_{j=1}^{t-k}\prod_{l=1}^{n-t}\frac{i+j+l-1}{i+j+l-2} 
    =\prod_{i=1}^{k+1}\frac{\binom{n-k+i-1}{t-k}}{\binom{t-k+i-1}{t-k}}
    =\prod_{i=0}^{k}\frac{\binom{n}{t-i}}{\binom{n}{i}}.
  \end{equation}
  
\end{rmk}
\paragraph{\ref{section:gl3}.2}
In the following, we fix an arbitrary field \( k \) of
characteristic \( p>0 \) and we use \( G \), \( B \), etc., to denote the
corresponding group scheme over \( k \) obtained by base change \(
\mathbb{Z}\to k \). Now we have \( H^{2}(m,-n-2)\cong
H^{1}(-m-2,n)^{*} \) and we can apply the results in \cite{Jan03} II.5.15.

For \( \lambda \) dominant, denote by \( L(\lambda) \) (resp. \( V(\lambda) \))  the simple \( G
\)-module (resp. Weyl mo\-du\-le) of highest weight \( \lambda \). If \(
\lambda \) is not dominant, we use the convention that \( L(\lambda)=V(\lambda)=0 \). Then
we have the following proposition.

In \cite{Liu19} Thm.1, the author has proved that if \( n=ap^{d}+r \)
with \( 1\leq a\leq p-1 \), \( d\geq 1 \) and \( 0\leq r<p^{d} \),
there exists an exact sequence
\begin{displaymath}
  0\to L(0,a)^{(d)}\otimes H^{2}(r,-r-2)\to H^{2}(n,-n-2)\to
  Q(n,-n-2)\to 0
\end{displaymath}
where \( Q(n,-n-2) \) is a certain quotient of \( V(n,-n-2) \). If \(
r<p \), we have \( H^{2}(r,-r-2)=0 \) according to
\autoref{cor:nsmall}, and hence \( H^{2}(n,-n-2)=Q(n,-n-2) \). We will
determine \( Q(n,-n-2) \) in this case. 
\begin{proposition}
  \label{thm:main3}
  If \(n=ap^{d}+r\) with \(a\in\{1,2,\cdots,p-1\}\) and \( r\in\{0,1,\cdots,p-1\} \),
  then we have an exact sequence of \( G \)-modules
  \begin{equation}\label{eq:resultat3}
    \xymatrix{
    0\ar[r]&L(p^{d}-1,(a-2)p^{d}+r)\ar[r]&V(r,n-2r-2)\ar[r]&H^{2}(n,-n-2)\ar[r]&0.
    }
  \end{equation}
\end{proposition}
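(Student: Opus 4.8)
The plan is to first collapse the exact sequence of \cite{Liu19} to an isomorphism \( H^{2}(n,-n-2)\cong Q(n,-n-2) \), then realise this module as a quotient of the Weyl module \( V(r,n-2r-2) \), and finally identify the kernel through a character computation. Since \( r\leq p-1<p\leq p^{d} \), \autoref{cor:nsmall} shows that \( H^{2}(r,-r-2) \) has no \( p \)-torsion; as \( (r,-r-2) \) lies on the wall, Borel--Weil--Bott gives \( H^{2}_{\mathbb{Q}}(r,-r-2)=0 \), so this integral module is a finite torsion group with trivial \( p \)-part, whence \( H^{2}_{k}(r,-r-2)\cong H^{2}(r,-r-2)\otimes k=0 \). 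Feeding this into
\[
0\to L(0,a)^{(d)}\otimes H^{2}(r,-r-2)\to H^{2}(n,-n-2)\to Q(n,-n-2)\to 0
\]
kills the left-hand term and gives \( H^{2}(n,-n-2)\cong Q(n,-n-2) \).

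Next I would pin down the highest weight. Specialising \autoref{cor:onthewall} to \( \SL_{3} \), the weights are the \( \nu_{t,k}=(-1,n)-k\alpha-t\beta \), and along the top row \( k=0 \) the matrix of \autoref{rmk:sl3mur} is the \( 1\times1 \) block \( \binom{n}{t} \), whose cokernel survives modulo \( p \) exactly when \( p\mid\binom{n}{t} \). Writing \( n=ap^{d}+r \) in base \( p \) (units digit \( r \), digit \( a \) in position \( d \), zeros in between, using \( a,r<p \) and \( d\geq1 \)), Lucas' theorem gives that the least \( t\geq1 \) with \( p\mid\binom{n}{t} \) is \( t=r+1 \), so the candidate highest weight is \( \lambda_{r}=\nu_{r+1,0}=(r,n-2r-2) \), of multiplicity \( 1 \). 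Once one knows that \( \lambda_{r} \) is in fact maximal among all weights, \( H^{2}(n,-n-2) \) has simple head \( L(\lambda_{r}) \), hence is generated by a highest weight vector, and there is a surjection \( \pi\colon V(r,n-2r-2)\twoheadrightarrow H^{2}(n,-n-2) \).

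The key simplification is that I need not analyse \( \ker\pi \) directly as a module: it suffices to establish the character identity
\[
\ch V(r,n-2r-2)=\ch L(p^{d}-1,(a-2)p^{d}+r)+\ch H^{2}(n,-n-2).
\]
Indeed, \( \ker\pi \) is then an honest submodule whose character equals that of the irreducible \( L(p^{d}-1,(a-2)p^{d}+r) \), and a module with the character of a simple module is that simple module; so the identity forces \( \ker\pi\cong L(p^{d}-1,(a-2)p^{d}+r) \) and yields the desired sequence. I would compute the right-hand simple character by Steinberg's tensor product theorem, factoring \( L(p^{d}-1,(a-2)p^{d}+r)\cong L(p-1,r)\otimes L(p-1,0)^{(1)}\otimes\cdots\otimes L(p-1,0)^{(d-1)}\otimes L(0,a-2)^{(d)} \). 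When \( a=1 \) the weight \( (p^{d}-1,r-p^{d}) \) is not dominant, so this factor is \( 0 \) and the identity reads \( H^{2}(n,-n-2)\cong V(r,n-2r-2) \); in the degenerate range \( d=1 \), \( r=p-1 \) one checks \( \lambda_{r}=(p^{d}-1,(a-2)p^{d}+r) \) with \( V(\lambda_{r})=L(\lambda_{r}) \) of Steinberg type, so \( H^{2}(n,-n-2)=0 \) and the statement is trivial.

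The main obstacle is thus the character of \( H^{2}(n,-n-2) \), together with the accompanying claim that \( \lambda_{r} \) is the global maximal weight. Both amount to controlling the cokernel of the matrix \( D_{n,t,k} \) of \autoref{rmk:sl3mur} for every admissible \( (t,k) \), not just the top row. I see two routes. The combinatorial one extracts each multiplicity \( \dim_{\mathbb{F}_{p}}\coker(D_{n,t,k}) \) from the Smith normal form of \( D_{n,t,k} \); the subtlety is that the Kratthenthaler evaluation \eqref{eq:detmur} controls only the \( p \)-adic valuation of \( \det D_{n,t,k} \), i.e. the total number of elementary divisors counted with multiplicity, whereas the weight multiplicity is the number of elementary divisors divisible by \( p \), so one must genuinely track the Smith form. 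The structural route instead uses Serre duality \( H^{2}(n,-n-2)\cong H^{1}(-n-2,n)^{*} \) and the \( \SL_{2} \)-reduction of \cite{Jan03} II.5.15 along \( \alpha \) to write \( \ch H^{1}(-n-2,n) \) as an alternating sum of Euler characteristics, after which the identity becomes a manipulation of Weyl characters against the Steinberg factorisation above. I expect this matching over all \( (t,k) \) to be where the real difficulty lies.
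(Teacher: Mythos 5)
Your reduction to a character identity is sound as far as it goes (characters of simple modules are linearly independent, so a module whose character equals \( \ch L(\lambda) \) is indeed isomorphic to \( L(\lambda) \)), but the proposal never proves that identity, and it is not a routine verification that can be deferred: it is essentially the whole theorem. Knowing \( \ch H^{2}(n,-n-2) \) over \( K \) means knowing, for every admissible \( (t,k) \), the number of elementary divisors of \( D_{n,t,k} \) that are divisible by \( p \). As you yourself observe, the only closed-form tool available, the Proctor/Krattenthaler evaluation \eqref{eq:detmur}, controls the \( p \)-adic valuation of \( \det D_{n,t,k} \), i.e.\ the total length of the \( p \)-primary part of \( \coker D_{n,t,k} \), which bounds but does not determine that number. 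So the combinatorial route stalls exactly where you say it does, and the ``structural route'' via Serre duality and \cite{Jan03} II.5.15 is only named, never carried out. A proposal that explicitly leaves this point open has a genuine gap.

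There is also a second, more localized error: from ``\( \lambda_{r} \) is the unique maximal weight and has multiplicity \( 1 \)'' you infer that \( H^{2}(n,-n-2) \) has simple head and is therefore a quotient of \( V(r,n-2r-2) \). That inference is false in general: \( L(\lambda)\oplus L(\mu) \) with \( \mu<\lambda \) has a unique maximal weight of multiplicity \( 1 \) but is not a highest weight module. The surjection requires a structural input, which the paper obtains from Serre duality plus \cite{Jan03} II.5.15: the socle of \( H^{1}(-n-2,n) \) is simple with highest weight \( (n-2r-2,r) \) (here \( r<p \) is used), and dualizing shows \( H^{2}(n,-n-2) \) is generated by its highest weight vector. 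More importantly, the paper then sidesteps any character computation. For \( r=0 \) it invokes Doty's theorem \cite{Doty85} on the submodule lattice of the multiplicity-free Weyl module \( V(0,ap^{d}-2) \): since the kernel \( K \) is a submodule, identifying it reduces to finitely many yes/no questions --- whether \( H^{2} \) contains the weights \( \nu_{p^{d},0} \) and \( \lambda_{i}=\nu_{t_{i},k_{i}} \) for \( i=1,\dots,d \) --- and each of these \emph{is} decided by a single determinant valuation, precisely the kind of information \eqref{eq:detmur} does provide. For \( 1\leq r\leq p-1 \) the \( r=0 \) sequence is then transported by the translation functor \( T_{\lambda}^{\mu} \) using \cite{Jan03} II.7.11 and II.7.15. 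These two devices --- Doty's submodule lattice and translation functors --- are exactly what replaces the full-character computation your plan requires, and without them (or an actual proof of your character identity) the proposal does not establish the proposition.
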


\begin{rmk}
   If \( n=p^{2}-1 \), then \( H^{2}(n,-n-2)\cong H^{1}(-n-2,n)^{*}=0
   \) by \cite{Jan03} II.5.15 a) and \(
   V(r,n-2r-2)=V(p-1,(p-3)p+p-1)\cong L(p-1,(p-3)p+p-1) \) by
   \cite{Jan03} II 3.19 and Steinberg's tensor identity. Hence the
   proposition is true in this case and we may assume that \( n\neq
   p^{2}-1 \) in the proof.

   If \( a=1 \), then we have \( H^{2}(n,-n-2)\cong
   V(r,p^{d}-r-2)=V(r,n-2r-2) \) by \cite{Liu19} Thm.2. On the other
   hand, we have \( L(p^{d}-1,(a-2)p^{d}+r)=L(p^{d}-1,r-p^{d})=0 \)
   by our convention. Hence we can also suppose that
    \( a\geq 2 \) in the proof.
 \end{rmk}

 \begin{proof}
   By Serre duality, we have \( H^{2}(n,-n-2)\cong H^{1}(-n-2,n)^{*}
   \). According to \cite{Jan03} II.5.15, the socle of \(
   H^{1}(-n-2,n) \) is simple and isomorphic to \( L(n-2r-2,r)
   \). Since \( r<p \), \( (n-2r-2,r) \) is also the highest weight of
   \( H^{1}(-n-2,n) \) by the same proposition.
 Hence by duality,  \( H^{2}(n,-n-2) \) is generated by its highest
 weight \( (r,n-2r-2)\).  We thus
  have an exact sequence of \( G \)-modules
  \begin{equation}
    \begin{tikzcd}
      0\ar[r]&K\ar[r]&V(r,n-2r-2)\ar[r]&H^{2}(n,-n-2)\ar[r]&0.
    \end{tikzcd}\label{eq:9c8020501d7af93e}
  \end{equation}
It suffices to prove that \( K\cong L(p^{d}-1,(a-2)p^{d}+r) \)

1) First suppose that \( r=0 \). In this case, \( n=ap^{d} \) and the
Weyl module \( V(0,ap^{d}-2) \) has no multiplicity. The submodule
structure of \( V(0,ap^{d}-2) \) has been determined by Doty  (\cite{Doty85}).

  As in \autoref{prop:sl3above}, set
  \begin{displaymath}
    \nu_{t,k}=(2n-1)\omega_{1}-(n+k)\alpha-t\beta=(t-2k-1,n+k-2k).
  \end{displaymath}
  We want to prove that
  \begin{displaymath}
    K\cong L_0 = L\big(p^d - 1, (a-2) p^d\big) = L(p^d - 1, n-2 p^d) = L(\nu_{p^d,0}).
  \end{displaymath}

  Using the same notation as in \autoref{rmk:sl3mur}, we have
  \begin{displaymath}
    \det(D_{n,p^{d},0})=\binom{n}{p^{d}}=\binom{ap^{d}}{p^{d}}\equiv
  \binom{a}{1}\not\equiv 0
  \pmod p.
\end{displaymath}
Hence the matrix reduced modulo \( p \) is invertible and hence its cokernel is
zero. This means that \( H^{2}(n,-n-2) \) does not contain the weight
\( \nu_{p^{d},0} \), thus the \( \nu_{p^{d},0} \)-weight space is
contained in \( K \).

To prove that \( K= L_{0} \), we will use the results in
\cite{Doty85}. Doty considers the module \( H^{0}(m,0) \), while we
consider its dual \( V(0,m) \), for
\begin{displaymath}
  m = ap^d - 2 = (p-2) + \sum_{u=1}^{d-1} (p-1) p^u + (a-1) p^d.
\end{displaymath}

As in \cite[2.3]{Doty85}, for \(u = 0,\dots, d\), denote by
\(c_u(m)\) the \(u\)-th digit of the \(p\)-adic expansion of \(m\);
we thus have \(c_0(m) = p-2\),  \(c_u(m) = p-1\) for \(u = 1,\dots, d-1\),
 \(c_d(m) = a-1\) and 
 \(c_u(m) = 0\) for \(u > d\). 

\medskip As in \cite{Doty85}, Prop. 2.4, denote by \(E(m)\)  
the set of all  \(d\)-tuples \((a_1,\dots, a_d)\) of integers in 
\(\{0,1,2\}\) satisfying
\begin{equation}
0\leq c_{u}(m)+a_{u+1}p-a_{u}\leq 3(p-1)\label{eq:edffa97d563ec438}
\end{equation}
for all \( u=0,\cdots,d \) (here we use the convention that \(
a_{0}=a_{d+1}=0 \)).
\begin{lemma}
  We have \( E(m)=\{0,1\}^{d} \).
\end{lemma}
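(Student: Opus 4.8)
The plan is to establish the two inclusions $\{0,1\}^{d}\subseteq E(m)$ and $E(m)\subseteq\{0,1\}^{d}$ separately, by testing the defining inequalities \eqref{eq:edffa97d563ec438} directly against the known digits of $m$, namely $c_{0}(m)=p-2$, $c_{u}(m)=p-1$ for $1\leq u\leq d-1$, and $c_{d}(m)=a-1$, while keeping in mind the standing hypotheses $d\geq 1$ and $2\leq a\leq p-1$.

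First I would prove $\{0,1\}^{d}\subseteq E(m)$. Fix $(a_{1},\dots,a_{d})\in\{0,1\}^{d}$ and check \eqref{eq:edffa97d563ec438} index by index, splitting the range of $u$ into three parts. At the bottom index $u=0$ one has $c_{0}(m)+a_{1}p=(p-2)+a_{1}p$, which lies in $\{p-2,\,2(p-1)\}$ and so is between $0$ and $3(p-1)$. At an interior index $1\leq u\leq d-1$ the quantity $(p-1)+a_{u+1}p-a_{u}$ is extremized at $p-2$ (when $a_{u+1}=0,a_{u}=1$) and at $2p-1$ (when $a_{u+1}=1,a_{u}=0$); the lower bound uses only $p\geq2$ and the upper bound uses $2p-1\leq 3p-3$, i.e. $p\geq2$. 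At the top index $u=d$, since $a_{d+1}=0$ the quantity is $(a-1)-a_{d}$, and here the hypothesis $2\leq a\leq p-1$ is exactly what guarantees $0\leq(a-1)-a_{d}\leq a-1\leq p-2$; this is the only place where $a\geq 2$ is needed. (When $d=1$ the interior range is empty and only the cases $u=0$ and $u=d=1$ occur.)

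For the reverse inclusion $E(m)\subseteq\{0,1\}^{d}$ it suffices to rule out the value $2$ in every coordinate, and the key device is a downward propagation argument. The upper bound of \eqref{eq:edffa97d563ec438} at $u=0$ reads $(p-2)+a_{1}p\leq 3(p-1)$, i.e. $a_{1}p\leq 2p-1$, which forces $a_{1}\leq 1$. Now suppose $a_{u}=2$ for some $u\geq 2$; then the upper bound at the interior index $u-1$ (where $c_{u-1}(m)=p-1$) gives $(p-1)+2p-a_{u-1}\leq 3(p-1)$, hence $a_{u-1}\geq 2$, so $a_{u-1}=2$. By descending induction this would force $a_{1}=2$, contradicting $a_{1}\leq 1$; therefore no coordinate equals $2$, and $E(m)\subseteq\{0,1\}^{d}$. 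Combining the two inclusions yields $E(m)=\{0,1\}^{d}$.

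The argument is a finite combinatorial verification, so there is no deep obstacle; the one genuinely non-routine point is the telescoping downward propagation, together with correctly bookkeeping which digit value sits at each index. The mechanism is that the interior digit $p-1$ is precisely large enough to "carry" a forbidden coordinate $2$ down to the previous index, so that a single $2$ anywhere would cascade to $a_{1}=2$, which the boundary digit $c_{0}(m)=p-2$ already excludes.
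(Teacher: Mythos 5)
Your proof is correct and follows essentially the same route as the paper's: the same split into the boundary index $u=0$, the interior indices with digit $p-1$, and the top index $u=d$, the same inequalities, and the same observation that the hypothesis $a\geq 2$ is needed only at the top digit. The only difference is organizational---you prove the two inclusions separately, ruling out the value $2$ by a downward cascade, whereas the paper runs a single ascending induction establishing at each index the equivalence between the inequality and $a_u\in\{0,1\}$---but the underlying computations are identical.
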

\begin{proof} We prove by induction on \(u_{0}\in\{1,\cdots,d\} \) that
  \eqref{eq:edffa97d563ec438} holds for \(u= 0,\cdots,u_{0}-1 \) if
  and only if \( 0\leq a_{u}\leq 1 \) for all \( 1\leq u\leq u_{0} \). For \( u=0 \) in
  \eqref{eq:edffa97d563ec438}, we get
  \begin{equation}
    0 \leq c_0(m) + a_1 p = p - 2 + a_1 p \leq 3(p-1).
  \end{equation} 
  This inequality holds if and only if \( 0\leq a_{1}\leq 1 \).

  Suppose that for some \( 1\leq u_{0}\leq d-1 \), we have proved that
  \eqref{eq:edffa97d563ec438} holds for \( u=0,\cdots,u_{0}-1 \) if
  and only if \( 0\leq a_{u}\leq 1 \) for all \( 1\leq u\leq u_{0}
  \). Now by taking \( u=u_{0} \), \eqref{eq:edffa97d563ec438} gives
  \begin{equation}\label{eq:u0induction}
    0 \leq c_{u_{0}}(m) + a_{u_{0} +1} p - a_{u_{0}} = p - 1 + a_{u_{0}+1} p - a_{u_{0}} \leq 3(p-1).
  \end{equation}
   Assuming \( 0\leq a_{u_{0}}\leq 1 \), \eqref{eq:u0induction}
  holds if and only if \( 0\leq a_{u_{0}+1}\leq 1 \). Hence by
  induction, \eqref{eq:edffa97d563ec438} holds for \( u=0,\cdots,d-1 \) if
  and only if \( 0\leq a_{u}\leq 1 \) for all \( 1\leq u\leq d
  \).   At last, for \(u = d\), \eqref{eq:edffa97d563ec438} becomes
  \[
    0 \leq c_d(m) - a_d = a-1 - a_d \leq 3(p-1)
  \]
  which is automatically satisfied if \( 0\leq a_{d}\leq 1 \) since \(2\leq
  a\leq p-1\).
 This finishes the proof of the lemma.
\end{proof}

\smallskip Since \(V(0,m)\) is the dual of \(H^0(m,0)\),
it contains a simple module \(L(x,y)^* = L(y,x)\) 
if and only if  \(L(x,y)\) is a quotient of \(H^0(m,0)\). By
\cite{Doty85} Thm.2.3, the submodule lattice of \( H^{0}(m,0) \) is
equivalent with the lattice of \( E(m) \) equipped with the partial
order \( (a_{1},\cdots,a_{d})\leq (a'_{1},\cdots,a'_{d}) \) if and
only if \( a_{i}\leq a'_{i} \) for all \( i \). As in \cite{Doty85} 2.4, for \(
a=(a_{1},\cdots,a_{d})\in E(m)  \) and \( u\in\{0,\cdots,d\} \),  let \(N_u(a)\)
(resp. \(R_u(a)\)) be the 
quotient
(resp. the remainder) of the  Euclidiean division of 
\(c_u(m) + a_{u+1} p - a_u\) by \(p-1\). (And one takes \(a_0 = 0 =
a_{d+1}\)). Then the simple factor of \( H^{0}(m,0) \) corresponding to
\( a\in E(m) \) is
\( L(b_{1}-b_{2},b_{2}-b_{3}) \), where \( b_{j} \) is determined by
the following rule
(cf. \cite{Doty85} top of the page 379):
\begin{equation}
  \label{eq:1dff54036e966e97}
  c_{u}(b_{j})=
  \begin{cases}
    p-1,\quad &\text{if}\; j\leq N_{u}(a),\\
    R_{u}(a),\quad &\text{if}\; j=N_{u}(a)+1,\\
    0,\quad &\text{if}\; j>N_{u}(a)+1.
  \end{cases}
\end{equation}

Taking this into account,  we know that \(V(0,ap^d - 2)\) contains a
unique simple submodule \(L(\nu)^*\), which corresponds to the maximal
element \(e = (1,\dots,1)\) of \(E(m)\). Suppose \(
\nu=(b_{1}-b_{2},b_{2}-b_{3}) \). We will calculate \(
b_{1},b_{2},b_{3} \) using \eqref{eq:1dff54036e966e97}.
In this case, for \(u = 0,\dots, d\),  \(N_u(e)\)
(resp. \(R_u(e)\)) is the 
quotient
(resp. the remainder) of the  Euclidiean division of 
\(c_u(m) + e_{u+1} p - e_u\) by \(p-1\), where \( e_{0}=e_{d+1}=0 \)
and \( e_{1}=e_{2}=\cdots=e_{d}=1 \). 
We thus have: 
\[
c_0(m) + e_1 p - e_0 = p-2 + p - 0 = 2p-2, \qquad\text{thus \(N_0(e) = 2\) and } R_0(e) = 0.
\]
Then for \(u = 1,\dots, d-1\), we have: 
\[
c_u(m) + e_{u+1} p - e_u = p-1 + p - 1 = 2p-2, \qquad\text{thus \(N_u(e) = 2\) and } R_u(e) = 0.
\]
Finally, for \(u = d\) we have \(c_d(m) + e_{d+1} p - e_d = a-1 + 0 -
1 = a-2\) and hence \(N_d(e) = 0\) and \(R_d(e) = a-2\). Therefore, the coefficients 
\(c_u(b_j)\) of the \(p\)-adic expasion  of \(b_j\) are given for \(j = 1\) by: 
\[
c_u(b_1) = \begin{cases} 
p-1 & \text{ if \(j = 1 \leq N_u(e)\) i.e.  if } u = 0, 1, \dots, d-1
\\ 
R_d(e) = a-2 & \text{ for \(u= d\) since } j = 1 = N_d(e)+1.
\end{cases}
\]
Then, for \(j = 2\) the coefficients \(c_u(b_2)\) are given by 
\[
c_u(b_2) = \begin{cases} 
p-1 & \text{ if \(j = 2 \leq N_u(e)\) i.e. if } u = 0, 1, \dots, d-1
\\ 
0 & \text{ for \(u= d\) since } j = 2 > N_d(e)+1 . 
\end{cases}
\]
Finally, for  \(j = 3\) the coefficients \(c_u(b_3)\) are given by
\[
c_u(b_3) = \begin{cases} 
R_u(e) = 0 & \text{ si \(j = 3 = N_u(e) +1 \) i.e. if } u = 0, 1, \dots, d-1
\\ 
0 & \text{ pour \(u= d\) car } j = 3 > N_d(e)+1 . 
\end{cases}
\]
We thus obtain the triplet \((p^d-1 + (a-2) p^d, p^d - 1, 0)\) and
then the dominant weight
\[
\nu = (a-2)p^d \omega_1 + (p^d - 1) \omega_2
\]
and hence \(V(0, ap^d-2)\) contains as unique simple submodule the
simple module considered earlier:
\[
L_0 = L(\nu)^{*} = L(p^d - 1, (a-2)p^d) = L(p^d - 1, n - 2p^d) = L(\nu_{p^d,0}). 
\]
Since we have proved that the weight \(\nu_{p^{d},0}  \) is contained
in \( K \), we have \( L_{0}\subset K \). It remains to prove that \(
K\subset L_{0} \).

\medskip 
Still according to  \cite{Doty85}, Theorem 2.3 and \S 2.4,
the socle of \(V(0,m)/L_0\) is the direct sum of the simple 
modules  \(L(e^i)^*\), for \(i = 1,\dots, d\), where each \(e^i\)
means the \(d\)-tuple: 
\[
(1,\dots, 1,0,1,\dots, 1)
\]
with the unique  \(0\) at the \(i\)-th position. We need to determine
the highest weight of \(L(e^i)\),
still with the help of \eqref{eq:1dff54036e966e97}. 
This time we have,
\[
N_{i-1}(e^i) = 0, \qquad R_{i-1}(e^i) = p-2, \qquad N_{i}(e^i) = 2, \qquad R_{i}(e^i) = 1,
\]
if \(i\leq d-1\), and
\[
N_{d-1}(e^d) = 0, \qquad R_{d-1}(e^d) = p-2, \qquad N_{d}(e^d)=0,\qquad R_d(e^d)=a-1.
\]
Thus the highest weight \(\lambda_{i}\) of \(L(e^{i})^{*}\)
is \( \nu_{t_{i},k_{i}} \) with
\((t_{i},k_{i})=(p^d+p^{i-1},p^i)\) for \(i=1,\cdots,d-1\) and \(\lambda_d=\nu_{t_{d},k_{d}}\)
with \((t_{d},k_{d})=(p^{d-1},0)\).

If \(1\leq i\leq d-1\), with the notation of \autoref{rmk:sl3mur}, we have
\begin{align*}
 d_{n,t_{i},k_{i}}&=\frac{ \prod_{l=0}^{k_{i}}\binom{n}{t_{i}-l}}{\prod_{l=0}^{k_{i}}\binom{n}{l}}\\
 v_p(d_{n,t_{i},k_{i}})&=(j+1)d-\sum_{l=0}^{k_{i}}v_p(t_{i}-l)-jd+\sum_{l=1}^{k_{i}}v_p{l}\\
&=d-v(\binom{t_{i}}{k_{i}})-v(t_{i}-k_{i})=d-v(\binom{t_{i}}{k_{i}})-(i-1)\\
&\geq d-(d-i)-(i-1)=1,
\end{align*}
where the last inequality results from the \( p \)-adic expansion of 
\(k_{i}=p^i\) and \(t_{i}-k_{i}=p^d-p^i+p^{i-1}\).
This means that  \(p\) divides \(d_{n,t_{i},k_{i}}\) and the cokernel
of \( D_{n,t_{i},k_{i}} \) is non-trivial. Hence \( H^{2}(n,-n-2) \)
contains the weight \( \lambda_{i} \) and \( L(\lambda_{i}) \) does
not exist in \( K \).

For \(i=d\), where \((t_{i},k_{i})=(p^{d-1},0)\), we have
\(d_{n,t_{d},k_{d}}=\binom{n}{t_{d}}=\binom{ap^d}{p^{d-1}}\) ,  and
\(v_p(d_{n,t_{d},k_{d}})=1\). Hence \( L(\lambda_{d}) \) does not
exists in \( K \) either. This proves that \( K=L_{0} \), i.e. there
is an exact sequence of \( G \)-modules:
\begin{equation}
\xymatrix{
    0\ar[r]&L(p^{d}-1,(a-2)p^{d})\ar[r]&V(0,ap^{d}-2)\ar[r]&H^{2}(ap^{d},-ap^{d}-2)\ar[r]&0.
  }\label{eq:ser0}
\end{equation}

\medskip
2) Suppose now that \( 1\leq r\leq p-1 \).

Set \(\lambda=(0,ap^{d}-2)\) and \(\mu=(r,ap^{d}-r-2)\). Then the
facet containing
 \(\lambda\) is defined by
\[F=\{\nu\in X(T)\mid  0<\langle \nu+\rho, \alpha^{\vee}\rangle<p,
  ap^{d}-p<\langle \nu+\rho, \beta^{\vee}\rangle<ap^{d},
  \langle \nu+\rho, \gamma^{\vee}\rangle=ap^{d}\}
\]
and \(\mu\) belongs to the closure \( \bar{F} \) (in fact it belongs
to \( F \) if \( r\neq p-1 \)).

Let \(T_{\lambda}^{\mu}\) be the translation functor from \( \lambda
\) to \( \mu \), which is exact (cf. \cite{Jan03} II.7.6). Then we
have \( T_{\lambda}^{\mu}V(\lambda)=V(\mu) \) by \cite{Jan03}
II.7.11. Apply \(
T_{\lambda}^{\mu} \) to the exact sequence, one obtains:
\eqref{eq:ser0}:
\begin{equation}
\xymatrix{
    0\ar[r]&T_{\lambda}^{\mu}L(p^{d}-1,(a-2)p^{d})\ar[r]&V(\mu)\ar[r]&T_{\lambda}^{\mu}H^{2}(ap^{d},-ap^{d}-2)\ar[r]&0.
  }\label{eq:transr0}
\end{equation}

Define the elements \(w_{1}, w_{2}\in W_{p}\) by
\[w_{1}\cdot
\nu=\nu-(\langle \nu+\rho,\beta^{\vee}\rangle-(a-1)p^{d})\beta\] and
\[w_{2}\cdot
\nu=s_{\beta}\cdot(\nu-(\langle
\nu+\rho,\beta^{\vee}\rangle-ap^{d})\beta)=\nu-ap^{d}\beta.\]
Then \((p^{d}-1,(a-2)p^{d})=w_{1}\cdot \lambda\) belongs to the facet.
\begin{multline*}
  F'=\{\nu\in X(T)\mid  \langle \nu+\rho, \alpha^{\vee}\rangle=p^{d},
  (a-2)p^{d}<\langle \nu+\rho, \beta^{\vee}\rangle<(a-2)p^{d}+p,\\
  (a-1)p^{d}<\langle \nu+\rho, \gamma^{\vee}\rangle<(a-1)p^{d}+p\}.
\end{multline*}
Hence
\( w_{1}\cdot\mu=(p^{d}-1,(a-2)p^{d}+r) \) belongs to the upper
closure of \( F' \) (see \cite{Jan03} II.6.2(3) for the definition of
the upper closure \( \hat{F'} \) of \( F' \) ). Therefore,  by \cite{Jan03}
  II 7.15, we have
  \[T_{\lambda}^{\mu}L(p^{d}-1,(a-2)p^{d})=T_{\lambda}^{\mu}L(w_{1}\cdot\lambda)
    \cong L(w_{1}\cdot \mu)=L(p^{d}-2, (a-2)p^{d}+r).\]

  Similarly, \(w_{2}\cdot\lambda=(ap^{d},-ap^{d}-2)\), hence by
  \cite{Jan03} II.7.11, we have
  \[T_{\lambda}^{\mu}H^{2}(ap^{d},-ap^{d}-2)=T_{\lambda}^{\mu}H^{2}(w_{2}\cdot
    \lambda)
    \cong H^{2}(w_{2}\cdot\mu)=H^{2}(ap^{d}+r,-ap^{d}-r-2).
  \]

  Therefore, the exact sequence  (\ref{eq:transr0}) becomes
  \eqref{eq:resultat3}. This proves \autoref{thm:main3}.
\end{proof}

\def\refname{References}

\end{document}